\newtheorem{theorem}{Theorem}[section]
\newtheorem{corollary}[theorem]{Corollary}
\newtheorem{definition}[theorem]{Definition}
\newtheorem{lemma}[theorem]{Lemma}
\newtheorem{proposition}[theorem]{Proposition}
\theoremstyle{remark}
\newtheorem{remark}[theorem]{Remark}
\def\cX{{\mathcal X}}
\def\cT{{\mathcal T}}
\def\cF{{\mathcal F}}
\def\cY{{\mathcal Y}}
\def\cC{{\mathcal C}}\def\cD{{\mathcal D}}\def\cH{{\mathcal H}}\def\cS{{\mathcal S}}
\def\ra{{\rightarrow}}
\newcommand{\hH}{\mathrm{H}}
\def\cDl#1{{\cD^{\leq #1}}}
\def\cDg#1{{\cD^{\geq #1}}}
\newcommand{\la}{{\scriptscriptstyle\leftarrow}}
\newcommand{\point}{{\scriptscriptstyle\bullet}}
\newcommand{\id}{{\rm id}}
\newcommand{\Hom}{\mathrm{Hom}}
\newcommand{\End}{\mathrm{End}}
\newcommand{\Ker}{\mathrm{Ker}}
\newcommand{\Coker}{\mathrm{Coker}}
\newcommand{\Ima}{\mathrm{Im}}
\begin{document}
\title{On tilted Giraud subcategories}
\dedicatory{Dedicated to Alberto Facchini on the occasion of his sixtieth birthday.}

\author{Riccardo Colpi, Luisa Fiorot, Francesco Mattiello}
\address[R. Colpi]{Dipartimento di Matematica Universit\`a degli Studi di Padova, Via Trieste 63, 35121 Padova}
\email{colpi@math.unipd.it}
\address[L. Fiorot]{Dipartimento di Matematica Universit\`a degli Studi di Padova, Via Trieste 63, 35121 Padova}
\email{fiorot@math.unipd.it}
\address[F. Mattiello]{Dipartimento di Matematica Universit\`a degli Studi di Padova, Via Trieste 63, 35121 Padova}
\email{mattiell@math.unipd.it}

\begin{abstract}
Firstly we provide a technique to move torsion pairs in abelian categories via adjoint functors and in particular
through Giraud subcategories. We apply this point in order to
develop a  correspondence between Giraud subcategories of an abelian category
$\cC$ and those of its tilt $\cH(\cC)$ i.e.,
the heart of a $t$-structure on $D^b(\cC)$ induced by a torsion pair.
\end{abstract}
\maketitle
\tableofcontents

\section*{Introduction}
One of the most useful process in Abelian category theory is the so-called localization of an abelian category $\cD$ 
to a quotient category $\cD/\cS$ by means of a Serre class $\cS$ in $\cD$. 
When $\cS$ is a localizing subcategory in the sense of \cite{0201.35602}, the canonical exact functor $\cD \to \cD/\cS$ has a fully faithful right adjoint functor 
$S: \cD/\cS \to \cD$ which allows to deal with $\cD/\cS$ as a full subcategory of $\cD$, which is called a Giraud subcategory of $\cD$. 
Dualizing the context, one get the notion of a co-Giraud subcategory.
Giraud and co-Giraud subcategories very often appear in the literature in very different settings (see \ref{rG}). 

On the other side, in 1981 Beilinson, Bernstein and Deligne introduced the notion of $t$-structure on a triangulated category
related to the study of the derived category of constructible sheaves on a stratified space. 
Actually the notion of $t$-structure  is a generalization of 
the notion of torsion pair on an abelian category (see for example  \cite{MR2327478}). 
In their work \cite{MR1327209} Happel, Reiten and Smalo related the study of
torsion pairs to Tilting theory and $t$-structures. 
In particular given an abelian category $\cC$ 
one can construct many non-trivial $t$-structures on its derived category
$D^b(\cC)$ by the procedure of tilting at a torsion pair (see \ref{Tstr}).

Inspired by the fundamental role of localizing subcategories in the study of 
problems of  gluing abelian categories or even triangulated categories we propose in this work a
bridge between the two previous abstract contexts.
The main progress in the present paper is to  show how the process of (co-) localizing moves from a basic abelian category to the level of its tilt, with respect to a torsion pair, and viceversa.

On the one side we deal with a (co-) Giraud subcategory $\cC$ of $\cD$, looking the way torsion pairs on $\cD$ reflect 
on $\cC$ and, conversely, torsion pairs on $\cC$ extend to $\cD$: in particular we find a one to one correspondence 
between arbitrary torsion pairs $(\cT, \cF)$ on $\cC$ and the torsion pairs $(\cX, \cY)$ on $\cD$ 
which are ``compatible'' with the (co-) localizing functor (Theorems~\ref{tt1-1} and \ref{ctt1-1}). 

On the other side, we compare this action of ``moving'' torsion pairs from $\cD$ to $\cC$ (and viceversa) with a ``tilting context'': 
more precisely, we look at the associated hearts $\cH_{\cD}$ and $\cH_{\cC}$ with respect to the torsion pairs $(\cT, \cF)$ on $\cC$ and $(\cX, \cY)$ on $\cD$, respectively, proving that $\cH_{\cC}$ is still a (co-) Giraud subcategory of $\cH_{\cD}$, and that the ``tilted'' torsion pairs in the two hearts are still related (Theorems~\ref{adjhearts} and \ref{cadjhearts}). Here the ambient Abelian category $\cD$ is arbitrary, with the unique request that the inclusion functor of $\cC$ into $\cD$ admits a right derived functor.

Finally given any Abelian category $\cD$ endowed with a torsion pair $(\cX, \cY)$, and considering any Giraud subcategory $\cC'$ of the associated heart 
$\cH_{\cD}$ which is ``compatible'' with the ``tilted'' torsion pair on $\cH_{\cD}$, we prove in Theorem~\ref{reconstruction}  how to recover 
a Giraud subcategory $\cC$ of $\cD$ such that
$\cC'$ is equivalent to the heart $\cH_{\cC}$ (with respect to the induced torsion pair).


%
%
%

%
\section{Serre, Giraud and co-Giraud subcategories} 

We begin by fixing some notations on Serre, Giraud and co-Giraud subcategories. A complete account on quotient categories and Serre classes can be found in \cite[Chapter 3]{0201.35602} and \cite[Section 1.11]{MR0102537}.

\begin{definition}
Let $\cD$ be an abelian category. A {\it Serre} class $\cS$ in $\cD$ is a full subcategory $\cS$ 
of $\cD$ such that for any short exact sequence $0\ra X_1\ra X_2\ra X_3\ra 0$ in $\cD$ the middle term
$X_2$ belongs to $\cS$ if and only if $X_1, X_3$ belong to $\cS$.
\end{definition}
\smallskip

The data of an abelian category $\cD$ and a Serre class $\cS$ of $\cD$ allow to construct a new abelian category, denoted by $\cD/\cS$, called the {\it quotient category of $\cD$ by $\cS$} (see \cite{MR0102537}). It turns out that $\cD/\cS$ is abelian and the canonical functor $T \colon\cD \to \cD/\cS$ is exact. A Serre class $\cS$ in $\cD$ is called a {\it localizing subcategory} (resp. {\it co-localizing subcategory}) if the functor $T$ admits a right adjoint (resp.~left adjoint) {\it section functor} $S$. In this case, the left exact (resp.~right exact) functor $S\circ T$ is called the {\it localization functor}. This localization functor is exact if and only if $S$ is exact (see \cite[Chapter 3]{0201.35602}).

\begin{definition}
An abelian category with a {\it distinguished Giraud subcategory} is the data $(\cD,\cC,l,i)$ of two abelian categories
$\cD$ and $\cC$ and two adjoint functors $\xymatrix{\cC \ar@<-0.5ex>[r]_{i}&\cD\ar@<-0.5ex>[l]_{l}}$ 
 (with $l$ left adjoint of $i$) such that  $l$ is exact and $i$ fully faithful. 
 
Dually an abelian category with a {\it distinguished co-Giraud subcategory} is the data $(\cD,\cC,j,r)$ of two abelian categories
$\cD$ and $\cC$ and two adjoint functors $\xymatrix{\cD \ar@<-0.5ex>[r]_{r}&\cC \ar@<-0.5ex>[l]_{j}}$ 
 (with $j$ left adjoint of $r$) such that  $r$ is exact and $j$ fully faithful.
\end{definition}
\smallskip

Therefore a localizing subcategory $\cS$ of $\cD$ defines a distinguished Giraud subcategory 
$(\cD, \cD/\cS, T, S)$. Conversely, given a distinguished Giraud subcategory $(\cD, \cC, l, i)$, the kernel of the functor $l$, i.e., the full subcategory $\cS$ of $\cD$ whose objects $S$ in $\cS$ satisfy $l(S)\cong0$, defines a localizing subcategory $\cS = \Ker(l)$ of $\cD$ whose associated quotient category is (equivalent to) $\cC$.

Let us denote by $\eta \colon \id_D \to i\circ l$ the unit of the adjunction $(l,i)$, and by $\cS^\bot$ the full subcategory of 
$\cD$ whose objects are defined by:
\[
\cS^\bot:=\{D\in\cD \,|\, \cD(S,D)=0, \forall S\in\cS\}.
\]
It turns out that 
\[
\cS^\bot=\{D\in\cD \,|\, \eta_D: D\ra il(D) \text{ is a monomorphism}\}.
\]
Moreover, let us notice that since $i$ is fully faithful the counit of the adjunction 
$\varepsilon :l\circ i\ra {\rm id}_{\cC}$ is an isomorphism of functors.

Dually, starting from a distinguished co-Giraud subcategory $(\cD, \cC, j, r)$, and denoting by 
$\varepsilon \colon j\circ r \to \id_D$ the counit of the adjunction $(j, r)$, the kernel of the functor $r$ defines a co-localizing subcategory $\cS = \Ker(r)$ of $D$ such that
\[\begin{matrix}
\;^\bot\cS:=&\{D\in\cD \,|\, \cD(D,S)=0, \forall S\in\cS\} \hfill \\
\hfill=&\{D\in\cD \,|\, \varepsilon_D: jr(D)\rightarrow D \text{ is an epimorphism}\}. \\
\end{matrix}
\]
Moreover, since $j$ is fully faithful, the unit of the adjunction 
$\eta :{\rm id}_{\cC} \ra r\circ j$ is an isomorphism of functors.
\smallskip

\begin{remark}\label{rG}
Giraud and co-Giraud subcategories very often appear in the literature in very different settings. 
For example a well known result due to Popescu and Gabriel (see, for instance, \cite[Chapter 10]{MR0389953}) 
tells that any Grothendieck category is in a natural way a Giraud subcategory of the category $R$-Mod of all the left $R$-modules, for a suitable ring R. 
On the other hand, the Yoneda tensor-embedding $M\mapsto {}-\otimes_R M$ naturally makes $R$-Mod to be a co-Giraud subcategory of the Grothendieck category $(\rm{FP}_R, \rm{Ab})$ whose objects are the covariant functors from the finitely presented right R-modules to the abelian groups, and the morphisms are the natural transformations between them. 
This allows, for instance, to deal with the extensively studied notion of pure-injective module by means of injective objects in $(\rm{FP}_R, \rm{Ab})$, thanks to a result of Gruson and Jensen \cite{MR633523}. Dually, the Yoneda Hom-embedding $M\mapsto \Hom_R(-,M)$ naturally makes $R$-Mod to be a Giraud subcategory of the Grothendieck category of contravariant functors $({}_R\rm{FP}^{op}, \rm{Ab})$. 
Auslander proposed to study the representation theory of $R$ in terms of the ambient category $({}_R\rm{FP}^{op}, \rm{Ab})$, and in \cite{MR0335575} and \cite{MR0379599} he and Reiten studied deeper the subcategory of the finitely presented objects of $({}_R\rm{FP}^{op}, \rm{Ab})$, developing the powerful theory of almost split sequences for Artin algebras.
\end{remark}
\smallskip

\section{Torsion and Torsion-free Classes} 

\begin{definition}
Given an abelian category $\cC$ a {\it torsion class} $\cT$ is a full subcategory of $\cC$ which is closed under taking inductive limits and extensions. Dually a  {\it torsion free class} $\cF$ is a full subcategory of $\cC$ which is closed under taking projective limits and extensions.
\par
A {\it torsion pair} $(\cT,\cF)$ in $\cC$ is a the data of a torsion class $\cT$ and a torsion free class $\cF$ such that $\cC(\cT,\cF)=0$ and any object $C\in\cC$ is the middle term of a short exact sequence $0 \to T \to C \to F \to 0$ with $T\in\cT$ and $F\in\cF$.
\end{definition}
\smallskip

A torsion class $\cT$ {\it cogenerates} $\cC$ when any object in $\cC$ is a subobject of a suitable object in $\cT$, and, dually, a torsion free class $\cF$ {\it generates} $\cC$ when any object in $\cC$ is a factor of a suitable object in $\cF$.
Typically, cogenerating torsion classes arise from Tilting theory and generating torsion free classes arise from Cotilting theory (see, for instance, \cite[Chapter I.3]{MR1327209} and \cite[Section 2]{MR2255195}).
\smallskip

\begin{remark}
If $\cC$ is a subcomplete abelian category in the sense of \cite{MR0191935} (that is, $\cC$ is an abelian category such that for any family $\{A_u \,|\, u\in U\}$ of subobjects of a fixed object $A$, the infinite sum 
$\sum_{u\in U}A_u$ and the infinite product $\prod_{u\in U}(A/A_u)$ exist in $\cC$), then any torsion class $\cT$ (torsion-free class $\cF$) on $\cC$ induces a torsion pair $(\cT,\cF)$ on $\cC$.
\end{remark}
\smallskip

The reader is referred to \cite[Chapter 1]{MR2327478} for more details.
\smallskip

In what follows, our aim is to move torsion class trough
exact functors  
and subsequently  trough a distinguished Giraud 
(resp. co-Giraud)
subcategory $\cC$ of $\cD$. 
Since torsion classes (resp.\ torsion free classes) 
are closed under inductive limits and extensions 
(resp.\ projective limits and extensions), 
it seems to us natural to use the left (resp.\ right) 
adjoint functor $l$ (resp.\ $i$), which respects inductive limits 
(resp.\ projective limits),
 in order to move torsion classes 
 (resp.\ torsion free classes) 
 from $\cC$ to $\cD$ (resp.\ from $\cD$ to $\cC$).
\bigskip

\begin{lemma}\label{MTC}(Dual to \ref{MTFC}).
Let $\cC$ be an abelian category and $\cT$ a torsion class on $\cC$.
Let $l:\cD \ra \cC$ be a functor between abelian categories which respects
inductive limits. Then the class
$$l^\la(\cT)=\{D\in\cD\; |\; l(D)\in \cT\}$$ 
is a torsion class in $\cD$. 
\end{lemma}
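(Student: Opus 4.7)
The plan is to verify directly the two defining properties of a torsion class for the full subcategory $l^\la(\cT)$ of $\cD$, namely closure under inductive limits and closure under extensions, leveraging the hypotheses on $l$ and on $\cT$.

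First I would handle closure under inductive limits, which is essentially immediate. Given a diagram $\{D_u\}_{u \in U}$ in $l^\la(\cT)$, I would observe that since $l$ respects inductive limits we have $l(\limind D_u) \cong \limind l(D_u)$; since each $l(D_u) \in \cT$ and $\cT$ is closed under inductive limits in $\cC$ by assumption, the object $l(\limind D_u)$ lies in $\cT$, so $\limind D_u \in l^\la(\cT)$.

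Next I would tackle closure under extensions, which is the more subtle step. Given a short exact sequence $0\ra D_1 \ra D_2 \ra D_3 \ra 0$ in $\cD$ with $D_1, D_3 \in l^\la(\cT)$, I want to show $l(D_2) \in \cT$. The key point is that $l$, preserving inductive limits, in particular preserves cokernels and is therefore right exact. Applying $l$ thus yields an exact sequence $l(D_1) \ra l(D_2) \ra l(D_3) \ra 0$ in $\cC$. Letting $K$ denote the image of $l(D_1) \ra l(D_2)$, I obtain a short exact sequence $0 \ra K \ra l(D_2) \ra l(D_3) \ra 0$, with $K$ a quotient of $l(D_1)$.

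The remaining ingredient I need is that a torsion class is closed under quotients; this follows from closure under inductive limits since cokernels are a particular inductive limit. Thus $K \in \cT$ as a quotient of $l(D_1) \in \cT$, and then closure of $\cT$ under extensions applied to $0 \ra K \ra l(D_2) \ra l(D_3) \ra 0$ gives $l(D_2) \in \cT$, i.e., $D_2 \in l^\la(\cT)$. The main (mild) obstacle is precisely this extension step: one must observe both that ``respects inductive limits'' for $l$ implies right exactness (to produce the exact sequence ending in $l(D_3) \ra 0$), and that ``closed under inductive limits'' for $\cT$ implies closure under quotients (to place $K$ in $\cT$).
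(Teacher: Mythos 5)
Your proposal is correct and follows essentially the same route as the paper's proof: closure under inductive limits is immediate from the hypotheses, and for extensions one applies the right exact functor $l$, notes the kernel of $l(D_2)\ra l(D_3)$ is a quotient of $l(D_1)$ hence in $\cT$, and concludes by extension-closure of $\cT$. Your explicit remark that quotient-closure of $\cT$ follows from closure under inductive limits is a detail the paper leaves implicit, but the argument is the same.
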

\begin{proof}
Clearly, the class $l^\la(\cT)$ is closed under taking inductive limits, because so is $\cT$ and $l$ respects inductive limits by assumption. Let us show that $l^\la(\cT)$ is closed under extensions. Consider a short exact sequence in $\cD$
\begin{equation*}
\xymatrix{
0 \ar[r] & X_1\ar[r] & D\ar[r] & X_2\ar[r] & 0}
\end{equation*}
with $X_1, X_2\in l^\la(\cT)$. By applying the functor $l$ (which is right exact) to this sequence we get an exact sequence in $\cC$
\[\xymatrix{
l(X_1)\ar[r] & l(D)\ar[r] & l(X_2)\ar[r] &0}
\]
with $l(X_1), l(X_2)\in\cT$. Taking the kernel $K$ of the morphism $l(D) \ra l(X_2)$, 
we see that $K$ is an epimorphic image of $l(X_1)$ and so $K\in\cT$, therefore $l(D)\in\cT$ as extension of objects in a torsion class. We conclude that $D\in l^\la(\cT)$.
\end{proof}
\smallskip

\begin{lemma}\label{MTFC}(Dual to \ref{MTC}).
Let $\cC$ be an abelian category and $\cF$ a torsion-free class on $\cC$.
Let $r:\cD \ra \cC$ be a functor between abelian categories which respects
projective limits. Then the class
$$r^\la(\cF)=\{D\in\cD\; |\; r(D)\in \cF\}$$ 
is a torsion-free class in $\cD$. 
\end{lemma}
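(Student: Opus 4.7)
The statement is explicitly labeled as the dual of Lemma \ref{MTC}, so my plan is to mirror that proof with all arrows reversed. There are two things to verify: (a) $r^\la(\cF)$ is closed under projective limits, and (b) $r^\la(\cF)$ is closed under extensions. Point (a) is immediate from the hypotheses: $r$ respects projective limits and $\cF$ is closed under projective limits by definition of a torsion-free class, so any projective limit of a diagram in $r^\la(\cF)$ is sent by $r$ into $\cF$ and hence lies back in $r^\la(\cF)$.

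For (b), I take a short exact sequence $0\to X_1\to D\to X_2\to 0$ in $\cD$ with $X_1, X_2\in r^\la(\cF)$. Since $r$ respects projective limits it preserves kernels, and is therefore left exact, so applying $r$ yields an exact sequence
\[
0\to r(X_1)\to r(D)\to r(X_2)
\]
in $\cC$. Set $I=\Ima(r(D)\to r(X_2))$, which is a subobject of $r(X_2)\in\cF$. Because $\cF$ is closed under subobjects (this is the dual counterpart of the closure of a torsion class under quotients, which is the fact invoked at the analogous point of \ref{MTC} when concluding that $K\in\cT$), we obtain $I\in\cF$. Then the short exact sequence $0\to r(X_1)\to r(D)\to I\to 0$ exhibits $r(D)$ as an extension of two objects of $\cF$, so $r(D)\in\cF$ and $D\in r^\la(\cF)$.

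The only non-routine ingredient is the observation that a torsion-free class, being closed under projective limits, is in particular closed under subobjects; this is exactly the dual of the (implicit) use in the proof of Lemma \ref{MTC} of the closure of torsion classes under epimorphic images, so no genuinely new obstacle appears.
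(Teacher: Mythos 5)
Your proof is correct and is essentially the paper's own argument: the paper proves Lemma~\ref{MTFC} only by declaring it dual to Lemma~\ref{MTC}, and your argument is exactly that dualization (left exactness of $r$ from preservation of projective limits, then the image $I$ of $r(D)\to r(X_2)$ playing the role of the kernel $K$ in \ref{MTC}). Your appeal to closure of $\cF$ under subobjects is the precise dual of the paper's appeal to closure of $\cT$ under epimorphic images, so nothing beyond the intended dualization is used.
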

%

\section{Moving Torsion Pairs trough Giraud subcategories}

Given an abelian category $\cD$ with a distinguished Giraud subcategory $\cC$, by Lemma~\ref{MTC}, 
the class $l^\la(\cT):=\{D\in\cD \,|\, l(D)\in \cT\}$ is a torsion class on $\cD$.
\smallskip

\begin{proposition}\label{ttD}
Let  $\cD$  be an abelian category with a distinguished Giraud subcategory $\cC$. Suppose that $\cC$ is endowed with a torsion pair $(\cT,\cF)$.
Then the classes $(\hat{\cT},\hat{\cF})$:
$$\begin{matrix}
\hat{\cT}:= l^\la(\cT)=\{X\in\cD \,|\, l(X)\in\cT\} \hfill \\
\hat{\cF }:= l^\la(\cF)\cap\cS^\bot=\{Y\in {\cD} \,|\, Y\in{\cS^\bot} \text{ {\rm and} } l(Y)\in\cF \} \\
\end{matrix}
$$
define a torsion pair on $\cD$ such that $i(\cT)\subseteq \hat{\cT}$, $i(\cF)\subseteq \hat{\cF}$, 
$l(\hat{\cT})=\cT$, $l(\hat{\cF})=\cF$.
\end{proposition}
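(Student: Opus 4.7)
My plan is to verify that $\hat{\cT}$ is a torsion class, establish the inclusions $i(\cT) \subseteq \hat{\cT}$ and $i(\cF) \subseteq \hat{\cF}$, prove the Hom-vanishing $\cD(\hat{\cT}, \hat{\cF}) = 0$, construct an explicit torsion decomposition of every object of $\cD$, and finally deduce that $\hat{\cF}$ is a torsion-free class via a right-orthogonal description. The first point is immediate from Lemma~\ref{MTC} since the exact functor $l$ preserves inductive limits. For the inclusions: if $T \in \cT$ then the counit $\varepsilon_T \colon l\,i(T) \to T$ is an isomorphism, so $l(iT) \in \cT$ and $iT \in \hat{\cT}$; analogously $l(iF) \cong F \in \cF$, and $iF \in \cS^\bot$ because $\cD(S, iF) \cong \cC(lS, F) = 0$ for every $S \in \cS = \Ker(l)$. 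For the Hom-vanishing, given $f \colon X \to Y$ with $X \in \hat{\cT}$ and $Y \in \hat{\cF}$, the monomorphism $\eta_Y$ applied to $f$ produces $\eta_Y \circ f$, whose adjunct across $(l,i)$ is $l(f) \colon lX \to lY$; this adjunct vanishes since $\cC(\cT, \cF) = 0$, so $\eta_Y \circ f = 0$ and finally $f = 0$.

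The heart of the argument is the construction of the decomposition. Given $D \in \cD$, consider the canonical torsion sequence $0 \to T_0 \to lD \to F_0 \to 0$ in $\cC$, apply $i$, and compose with the unit $\eta_D$ to obtain a morphism $D \to i(F_0)$; let $Y$ denote its image and $X$ its kernel, producing a short exact sequence $0 \to X \to D \to Y \to 0$ in $\cD$. Since $i(F_0) \in \cS^\bot$ (same Hom-computation as above) and $\cS^\bot$ is closed under subobjects, $Y \in \cS^\bot$. Applying the exact functor $l$ to this sequence and using the triangular identity $\varepsilon_{lD}\circ l(\eta_D) = \id_{lD}$, the induced morphism $lD \to lY$ is identified with the canonical projection $lD \twoheadrightarrow F_0$; therefore $lY \cong F_0 \in \cF$ and $lX \cong T_0 \in \cT$, i.e., $X \in \hat{\cT}$ and $Y \in \hat{\cF}$.

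Combining the Hom-vanishing with the existence of decompositions yields the orthogonal description $\hat{\cF} = \{D \in \cD \mid \cD(\hat{\cT}, D) = 0\}$: one inclusion is Hom-vanishing, while if $\cD(\hat{\cT}, Y) = 0$ then the $\hat{\cT}$-part of the torsion sequence of $Y$ is killed and so $Y \in \hat{\cF}$. This characterization automatically makes $\hat{\cF}$ closed under extensions and under any projective limits existing in $\cD$, so it is a torsion-free class and $(\hat{\cT}, \hat{\cF})$ is a torsion pair. The equalities $l(\hat{\cT}) = \cT$ and $l(\hat{\cF}) = \cF$ then follow by combining the trivial inclusions $\subseteq$ with $i(\cT) \subseteq \hat{\cT}$, $i(\cF) \subseteq \hat{\cF}$ and the counit isomorphism $l \circ i \cong \id_{\cC}$. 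The step I expect to require the most care is the triangular-identity computation, which is what transfers the canonical decomposition of $lD$ in $\cC$ into a clean torsion decomposition of $D$ in $\cD$.
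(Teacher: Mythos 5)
Your proof is correct and takes essentially the same route as the paper: your $X$, the kernel of the composite $D\to il(D)\to i(F_0)$, is exactly the paper's pullback $i(T)\times_{il(D)}D$, and the inclusions, the Hom-vanishing via the monomorphism $\eta_Y$, and the identifications $l(X)\cong T_0$, $l(Y)\cong F_0$ are argued in the same way (you use the triangle identity and image factorization where the paper applies the exact functor $l$ to the pullback diagram). Your only addition is the explicit verification, via the description $\hat{\cF}=\{D\in\cD \mid \cD(\hat{\cT},D)=0\}$, that $\hat{\cF}$ is closed under extensions and projective limits, a point the paper leaves implicit.
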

\begin{proof}
For any $T\in\cT$ we have $li(T)\cong T$, which proves that $i(\cT)\subseteq\hat{\cT}$.  
Moreover given $F\in\cF$ it is clear that $i(F)\in\cS^\bot$ and  $li(F)\cong F \in\cF$,
hence $i(\cF)\subseteq \hat{\cF}$. We deduce that $\cT=li(\cT)\subseteq l(\hat{\cT})\subseteq \cT$ and
$\cF=li(\cF)\subseteq l(\hat{\cF})\subseteq \cF$, which prove that 
$l(\hat{\cT})=\cT$ and $ l(\hat{\cF})=\cF$.
Let us show that $(\hat{\cT},\hat{\cF})$ is a torsion pair on $\cD$.

Given $X\in\hat{\cT}$ and $Y\in\hat{\cF}$, 
$$
{\cD}(X,Y)\hookrightarrow {\cD}(X,il(Y))\cong{\cC}(l(X),l(Y))=0.
$$

It remains to prove that for any $D$ in $\cD$ there exists a short exact sequence
\begin{equation*}\label{Dseq}
\xymatrix{
0 \ar[r] & X\ar[r] & D\ar[r] & Y\ar[r] & 0}
\end{equation*}
with $X\in\hat{\cT}$ and $Y\in\hat{\cF}$.

Given $D$ in $\cD$  there exist $T\in\cT$ and $F\in\cF$ such that
the sequence
\begin{equation}\label{l(D)seq}
\xymatrix{
0 \ar[r]& T\ar[r] & l(D) \ar[r] & F  \ar[r] & 0
}
\end{equation}
is exact.
Let define $X:=i(T)\times_{il(D)}D$; then we obtain the diagram 
\begin{equation}\label{1}
\xymatrix{
0 \ar[r]& i(T)\ar[r] & il(D) \ar[r] & i(F) &  \\
0 \ar[r] & X \ar[r]\ar[u] & D\ar[r]\ar[u]^{\eta_D} & {D/X}\ar@{^{(}->}[u] \ar[r]& 0 \\
}
\end{equation}
whose rows are exact (the first because the functor $i$ is left exact since it is a right adjoint,
while the second by definition) and the map
$D/X\hookrightarrow i(F)$ is injective since the first square is cartesian.

Let us apply the functor $l$ to (\ref{1})  remembering that $l$ is exact (so in particular
it preserves pullbacks and exact sequences)  and that
$l\circ i\cong {\rm id}_{\cC}$:
$$
\xymatrix{
0 \ar[r]& T\ar[r] & l(D) \ar[r] & F\ar[r] & 0 \\
0 \ar[r] & l(X) \ar[r]\ar[u]^{\cong} & l(D)\ar[r]\ar[u]_{{\rm id}_{l(D)}} & {l(D/X)}\ar[u]_{\cong} \ar[r]& 0. \\
}
$$
The first row coincides with (\ref{l(D)seq}) which is exact,
$l(X)\cong T\times_{l(D)}l(D)\cong T\in\cT$, which proves that $X\in\hat{\cT}$ and so
$l(D/X)\cong F\in\cF$, and the third vertical arrow of (\ref{1}) proves that $D/X\in\cS^\bot$, thus $D/X\in \hat{\cF}$.
\end{proof}
\smallskip

The following is a corollary of \ref{MTFC}:

\begin{corollary}\label{DCorGS}
Let  $\cD$  be an abelian category with a distinguished Giraud subcategory $\cC$. Suppose that $\cD$ is endowed with a torsion pair $(\cX,\cY)$.
Then the class
$i^\la(\cY):=\{C\in\cC \,|\, i(C)\in \cY\}$
is a torsion free class on $\cC$.
\end{corollary}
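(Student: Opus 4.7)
The statement is explicitly flagged as a corollary of Lemma~\ref{MTFC}, so the plan is essentially to identify the correct functor to feed into that lemma and verify the hypothesis.

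My plan is the following. In Lemma~\ref{MTFC} the roles of the two abelian categories are dictated only by the direction of the functor: starting from a torsion-free class $\cF$ on the target of a functor which preserves projective limits, one pulls it back to a torsion-free class on the source. In the present setting I want to produce a torsion-free class on $\cC$ starting from the torsion-free class $\cY$ on $\cD$, so the relevant functor must go from $\cC$ to $\cD$. The natural candidate is the fully faithful embedding $i\colon\cC\to\cD$.

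The only hypothesis to check is therefore that $i$ respects projective limits. This is immediate from the distinguished Giraud subcategory setup: $i$ is a right adjoint to $l$, and right adjoints preserve all limits that exist, in particular projective limits. No exactness assumption on $i$ is needed beyond the left exactness that comes for free from being a right adjoint.

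Applying Lemma~\ref{MTFC} with $\cC$ and $\cD$ in the roles of $\cD$ and $\cC$ respectively, with $i$ in the role of $r$, and with $\cY$ in the role of $\cF$, one concludes that
\[
i^\la(\cY)=\{C\in\cC \,|\, i(C)\in\cY\}
\]
is a torsion-free class on $\cC$, which is exactly the claim. There is no genuine obstacle here; the only substantive point is recognising that $i$ is a right adjoint and hence commutes with projective limits, so the lemma applies verbatim.
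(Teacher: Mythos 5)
Your proof is correct and follows exactly the route the paper intends: the corollary is stated as an application of Lemma~\ref{MTFC}, applied to the functor $i\colon\cC\to\cD$ with the torsion-free class $\cY$, the only hypothesis being that $i$ preserves projective limits, which holds because $i$ is a right adjoint. This matches the paper's (implicit) argument verbatim.
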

\smallskip

\begin{proposition}\label{GL}
Let  $\cD$  be an abelian category with a distinguished Giraud subcategory $\cC$. 
Suppose that $\cD$ is endowed with a torsion pair $(\cX,\cY)$,and let
\[
\begin{matrix}
l(\cX):=\{T\in\cC \,|\, T\cong l(X), \exists \,X\in\cX\} \hfill \\
l(\cY):=\{F\in\cC \,|\, F\cong l(Y), \exists \,Y\in\cY\}\\
\end{matrix}
\]
Then $(l(\cX),l(\cY))$ defines a torsion pair on $\cC$ if and only if $il(\cY) \subseteq \cY$. In this case, 
$i^\la(\cY)=l(\cY)$.
\end{proposition}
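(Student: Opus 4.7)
My plan is to treat the two implications separately, with the forward direction being a routine adjunction computation and the reverse direction requiring a little more care. The organizing idea is to push the torsion decomposition of $i(C)$ in $\cD$ down to $\cC$ via the exact functor $l$, after first identifying $l(\cY)$ with $i^\la(\cY)$ so that Corollary~\ref{DCorGS} can be invoked.

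For the forward direction, assume $(l(\cX),l(\cY))$ is a torsion pair on $\cC$. Given $Y\in\cY$ and an arbitrary $X\in\cX$, the adjunction yields $\cD(X,il(Y))\cong\cC(l(X),l(Y))=0$, since $l(X)\in l(\cX)$ and $l(Y)\in l(\cY)$ are Hom-orthogonal by assumption. Hence $il(Y)\in\cX^\bot=\cY$, which proves $il(\cY)\subseteq\cY$.

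For the reverse direction, assume $il(\cY)\subseteq\cY$. The key preliminary step is to establish $l(\cY)=i^\la(\cY)$: the inclusion $i^\la(\cY)\subseteq l(\cY)$ is automatic from the isomorphism $C\cong li(C)$, while the reverse inclusion is exactly the hypothesis, since $F=l(Y)$ with $Y\in\cY$ gives $i(F)=il(Y)\in\cY$. By Corollary~\ref{DCorGS}, $l(\cY)$ is therefore a torsion-free class in $\cC$, and this simultaneously proves the final assertion of the proposition. Hom-orthogonality $\cC(l(\cX),l(\cY))=0$ follows from the same adjunction computation as in the forward direction. The required decomposition for a given $C\in\cC$ is obtained by applying the exact functor $l$ to the torsion decomposition $0\to X\to i(C)\to Y\to 0$ in $\cD$: since $li\cong\id_{\cC}$, this yields $0\to l(X)\to C\to l(Y)\to 0$ with $l(X)\in l(\cX)$ and $l(Y)\in l(\cY)$.

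The step I expect to require the most care is verifying that $l(\cX)$ is genuinely a torsion class in the sense of the paper, namely closed under inductive limits and extensions. My plan is to identify $l(\cX)={}^\bot (l(\cY))$: one inclusion is the Hom-orthogonality just established, and conversely any $C\in {}^\bot (l(\cY))$ admits the decomposition above, which forces the epimorphism $C\twoheadrightarrow l(Y)$ to vanish, so $l(Y)=0$ and $C\cong l(X)\in l(\cX)$. Since ${}^\bot \cK$ is closed under extensions and inductive limits for any subcategory $\cK$ (the latter because $\cC(\varinjlim C_j,F)\cong\varprojlim\cC(C_j,F)$ vanishes whenever each $\cC(C_j,F)$ does), this completes the verification that $(l(\cX),l(\cY))$ is a torsion pair on $\cC$.
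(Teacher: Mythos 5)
Your proof is correct and follows essentially the same route as the paper: both directions rest on the adjunction isomorphism $\cD(X,il(Y))\cong\cC(l(X),l(Y))$, the identification $l(\cY)=i^\la(\cY)$ combined with Corollary~\ref{DCorGS}, and the decomposition obtained by applying the exact functor $l$ to the torsion sequence of $i(C)$ in $\cD$. Your additional verification that $l(\cX)={}^\bot\bigl(l(\cY)\bigr)$ is genuinely a torsion class (closed under extensions and inductive limits) is a point the paper's proof leaves implicit, and you carry it out correctly.
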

\begin{proof}
First let us suppose that $il(\cY) \subseteq \cY$. Then since $l\circ i\cong \id_\cC$ one has $i^\la(\cY)=l(\cY)$ and
by Corollary~\ref{MTFC} this is a torsion free class on $\cC$. 
Given $T \in l(\cX)$ (i.e., $T\cong l(X)$, with $X \in \cX$) and $F \in i^\la(\cY)$, one has 
${\cC}(X,F)={\cC}(l(X), F)\cong{\cD}(X,i(F))=0$, 
since $i(F)\in\cY$ by the definition of $i^\la(\cY)$. 
Now let $C\in \cC$. 
There exist $X\in\cX$, $Y\in\cY$ and a short exact sequence in $\cD$
\[\xymatrix{
0 \ar[r] & X\ar[r] & i(C)\ar[r] & Y\ar[r] & 0.}
\]
Applying the functor $l$ to the previous sequence we get a short exact sequence in $\cC$
\[\xymatrix{
0 \ar[r] & l(X)\ar[r] & C\ar[r] & l(Y)\ar[r] & 0}
\]
where $l(X)\in l(\cX)$ and $l(Y)\in l(\cY)$, which proves that $(l(\cX),l(\cY))$ is a torsion pair on $\cC$.

Conversely, if $(l(\cX),l(\cY))$ is a torsion pair on $\cC$ then for every $X \in \cX$ and every $Y\in \cY$ one has
$0=\cC(l(X),l(Y))\cong\cD(X,il(Y))$, therefore $il(Y)\in \cY$.
\end{proof}
\smallskip

>From \ref{ttD} and \ref{GL} we derive the following correspondence: 

\begin{theorem}\label{tt1-1}
Let  $\cD$  be an abelian category with a distinguished Giraud subcategory $\cC$.
There exists a one to one correspondence between  torsion pairs 
$(\cX,\cY)$ on $\cD$ satisfying  
$il(\cY)\subseteq\cY\subseteq \cS^\bot$ 
and torsion pairs $(\cT,\cF)$ on $\cC$.
\end{theorem}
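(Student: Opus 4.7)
The plan is to exhibit the bijection as mutually inverse maps built from Propositions~\ref{ttD} and~\ref{GL}. I would set $\Phi(\cT,\cF):=(\hat{\cT},\hat{\cF})$, the construction of Proposition~\ref{ttD}, and $\Psi(\cX,\cY):=(l(\cX),l(\cY))$, which makes sense for $(\cX,\cY)$ satisfying $il(\cY)\subseteq \cY$ by Proposition~\ref{GL}. The first small check is that $\Phi$ lands in the subclass described in the theorem: $\hat{\cF}\subseteq \cS^\bot$ is immediate from the definition, while Proposition~\ref{ttD} gives $l(\hat{\cF})=\cF$ together with $i(\cF)\subseteq \hat{\cF}$, so $il(\hat{\cF})=i(\cF)\subseteq \hat{\cF}$. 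The composition $\Psi\circ\Phi$ is then the identity directly from the identities $l(\hat{\cT})=\cT$ and $l(\hat{\cF})=\cF$ of Proposition~\ref{ttD}.

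The substantive content is proving $\Phi\circ\Psi=\id$, i.e., that $\cX=\{D\in\cD\,|\,l(D)\in l(\cX)\}$ and $\cY=\cS^\bot\cap\{D\in\cD\,|\,l(D)\in l(\cY)\}$. The inclusions ``$\subseteq$'' are immediate, the second one using the hypothesis $\cY\subseteq \cS^\bot$. For the reverse inclusions I would take $D$ in the right-hand class, decompose it via $(\cX,\cY)$ as $0\to X'\to D\to Y'\to 0$, and apply the exact functor $l$. On the torsion side, $l(Y')$ is a quotient of $l(D)\in l(\cX)$, so $l(Y')\in l(\cX)\cap l(\cY)=0$ because $(l(\cX),l(\cY))$ is a torsion pair by Proposition~\ref{GL}; hence $Y'\in \cY\cap \cS$, and the hypothesis $\cY\subseteq \cS^\bot$ forces $Y'=0$, giving $D\cong X'\in\cX$. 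The torsion-free side is dual: $l(X')\hookrightarrow l(D)\in l(\cY)$ lies in $l(\cX)\cap l(\cY)=0$, so $X'\in\cS$; since $\cS^\bot$ is closed under subobjects and $X'\hookrightarrow D\in\cS^\bot$, we get $X'\in\cS\cap\cS^\bot=0$, whence $D\cong Y'\in\cY$.

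The main obstacle is the careful complementary use of the two hypotheses in that last step: $il(\cY)\subseteq\cY$ is what enables Proposition~\ref{GL}, ensuring $(l(\cX),l(\cY))$ really is a torsion pair (so that $l(\cX)\cap l(\cY)=0$), while $\cY\subseteq\cS^\bot$ is precisely what forces $\cY\cap\cS=0$ and, via closure of $\cS^\bot$ under subobjects, $\cS\cap\cS^\bot=0$ for subobjects of elements of $\cY$. Each hypothesis kills exactly one of the two ``error terms'' produced by the torsion decomposition, and neither alone would suffice.
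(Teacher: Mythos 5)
Your proof is correct and follows essentially the same route as the paper: both exhibit the bijection as the mutually inverse assignments $(\cT,\cF)\mapsto(\hat{\cT},\hat{\cF})$ from Proposition~\ref{ttD} and $(\cX,\cY)\mapsto(l(\cX),l(\cY))$ from Proposition~\ref{GL}, with $\Psi\circ\Phi=\id$ coming from $l(\hat{\cT})=\cT$, $l(\hat{\cF})=\cF$. The only difference is that you spell out, via the torsion decomposition and the observations $\cY\cap\cS=0$ and $\cS\cap\cS^\bot=0$, the identities $\widehat{l(\cX)}=\cX$ and $\widehat{l(\cY)}=\cY$ that the paper records as clear.
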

\begin{proof}
From one side, taking a torsion pair $(\cT, \cF)$ in $\cC$, the torsion pair $(\hat{\cT}, \hat{\cF})$ 
satisfies $il(\hat{\cF})\subseteq \hat{\cF}$ and one can easily verify that
$(l(\hat{\cT}),l(\hat{\cF}))=(\cT,\cF)$.

On the other side given $(\cX,\cY)$  a torsion pair on $\cD$
satisfying  $il(\cY)\subseteq\cY\subseteq \cS^\bot$, its corresponding torsion pair on $\cC$ is
$(l(\cX),l(\cY))$ (by \ref{GL}) for whom
it is clear  that $\widehat{l(\cY)}:=l^\la(l(\cY))\cap \cS^\bot=\cY$ 
(since $\cY\subseteq \cS^\bot$)
and so $(\cX,\cY)=(\widehat{l(\cX)},\widehat{l(\cY)})$.
\end{proof}
\smallskip


We briefly list 
the statements dual in the case of co-Giraud subcategories whose
proofs are simply the transcriptions of the previous ones in the opposite category. 
\smallskip

First of all let us consider the following corollary of \ref{MTFC}:

\begin{corollary}\label{cCorcGS}
Let  $\cD$  be an abelian category with a distinguished co-Giraud subcategory $\cC$. 
Suppose that $\cF$ is  a torsion-free class in $\cC$.
Then the class
$r^\la(\cF):=\{D\in\cD \,|\, r(D)\in \cF\}$
is a torsion-free class on $\cD$.
\end{corollary}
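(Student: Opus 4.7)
The plan is to read this as a direct corollary of Lemma~\ref{MTFC}. In a distinguished co-Giraud datum $(\cD, \cC, j, r)$ the functor $r$ is the right adjoint of $j$, and every right adjoint preserves all small limits. In particular $r$ respects projective limits, which is exactly the hypothesis Lemma~\ref{MTFC} asks of its functor. Applied to the given torsion-free class $\cF$ in $\cC$, that lemma yields immediately that $r^\la(\cF)=\{D\in\cD \,|\, r(D)\in\cF\}$ is a torsion-free class in $\cD$, which is the claim.

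There is no genuine obstacle here: the statement is a purely formal consequence of the adjunction together with Lemma~\ref{MTFC}. Two incidental observations may be worth recording. First, the exactness of $r$ built into the notion of a co-Giraud subcategory is not actually used; only the adjointness, hence preservation of limits, is required. Second, if one prefers not to cite Lemma~\ref{MTFC} as a black box, the proof is the transcription in the opposite category of the argument used for Lemma~\ref{MTC}: closure of $r^\la(\cF)$ under projective limits is inherited from $\cF$ through $r$, while closure under extensions is obtained by applying the left-exact functor $r$ to a short exact sequence $0\to X_1\to D\to X_2\to 0$ in $\cD$ with $X_1,X_2\in r^\la(\cF)$, and then using that a torsion-free class is closed under subobjects (being kernels, hence particular projective limits) and extensions to conclude $r(D)\in\cF$.
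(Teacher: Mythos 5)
Your proposal is correct and matches the paper's intended argument: the corollary is obtained by applying Lemma~\ref{MTFC} to $r$, which respects projective limits because it is a right adjoint (of $j$) in the co-Giraud datum. Your side remarks (exactness of $r$ not needed; the dual transcription of the proof of Lemma~\ref{MTC}) are accurate as well.
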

\smallskip

\begin{proposition}\label{cttD}
Let  $\cD$  be an abelian category with a distinguished co-Giraud subcategory $\cC$. Suppose that $\cC$ is endowed with a torsion pair $(\cT,\cF)$.
Then the classes $(\hat{\cT},\hat{\cF})$:
$$\begin{matrix}
\hat{\cT}:= r^\la(\cT)\cap{}^\bot\cS=\{X\in\cD \,|\, X\in{}^\bot\cS \text{ {\rm and} } r(X)\in\cT\} \hfill \\
\hat{\cF }:= r^\la(\cF)=\{Y\in {\cD} \,|\,  r(Y)\in\cF \} \\
\end{matrix}
$$
define a torsion pair on $\cD$ such that $j(\cT)\subseteq \hat{\cT}$, $j(\cF)\subseteq \hat{\cF}$, 
$r(\hat{\cT})=\cT$, $r(\hat{\cF})=\cF$.
\end{proposition}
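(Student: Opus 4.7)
The plan is to transcribe the proof of Proposition~\ref{ttD} into the opposite category, which amounts to using that $j$ is a fully faithful left adjoint of the exact functor $r$; hence $j$ is right exact, and the unit $\eta \colon \id_\cC \to rj$ is an isomorphism. Four steps suffice.

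First, the easy inclusions: for $C \in \cC$ and $S \in \cS = \Ker(r)$ one has $\cD(j(C), S) \cong \cC(C, r(S)) = 0$, so $j(\cC) \subseteq {}^\bot\cS$. Combined with $rj \cong \id_\cC$, this yields $j(\cT) \subseteq \hat\cT$ and $j(\cF) \subseteq \hat\cF$, and the equalities $r(\hat\cT) = \cT$, $r(\hat\cF) = \cF$ follow from $\cT = rj(\cT) \subseteq r(\hat\cT) \subseteq \cT$ (and analogously for $\cF$). Second, the Hom-vanishing: if $X \in \hat\cT$ and $Y \in \hat\cF$, the definition of ${}^\bot\cS$ makes $\varepsilon_X \colon jr(X) \to X$ an epimorphism, so applying $\cD(-, Y)$ yields
\[
\cD(X, Y) \hookrightarrow \cD(jr(X), Y) \cong \cC(r(X), r(Y)) = 0.
\]

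Third, to produce a torsion sequence for an arbitrary $D \in \cD$, I would take the torsion sequence $0 \to T \to r(D) \xrightarrow{\alpha} F \to 0$ in $\cC$, apply the right exact functor $j$, and form the pushout
\begin{equation*}
\xymatrix{
jr(D) \ar[r]^{j(\alpha)} \ar[d]_{\varepsilon_D} & j(F) \ar[d] \\
D \ar[r]^{\pi} & Y
}
\end{equation*}
Since $j(\alpha)$ is epi, so is $\pi$, and I set $X := \Ker(\pi)$. A direct computation with the pushout shows that $X$ coincides with the image of the composition $j(T) \to jr(D) \xrightarrow{\varepsilon_D} D$. Applying the exact functor $r$ to $0 \to X \to D \to Y \to 0$, a diagram chase (using $rj \cong \id_\cC$ and comparing with the original exact sequence) gives $r(X) \cong T \in \cT$ and $r(Y) \cong F \in \cF$.

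It remains to verify $X \in {}^\bot\cS$, i.e., that $\varepsilon_X$ is epi. By naturality of $\varepsilon$ applied to the inclusion $\iota \colon X \hookrightarrow D$, the composition $\iota \circ \varepsilon_X$ equals $\varepsilon_D \circ jr(\iota)$; under the identification $jr(X) \cong j(T)$ the map $jr(\iota)$ becomes $j$ applied to $T \hookrightarrow r(D)$, so the outer composition $j(T) \to D$ is exactly the map whose image was shown to be $X$ in Step~3. Since $\iota$ is monic, $\varepsilon_X$ is epi. The main obstacle I foresee is precisely this naturality identification --- certifying that the canonical map $j(T) \to X$ produced by the pushout agrees, after $r(X) \cong T$, with the counit $\varepsilon_X$; everything else is a routine transcription of Proposition~\ref{ttD}.
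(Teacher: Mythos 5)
Your proof is correct and follows essentially the paper's route: the paper proves Proposition~\ref{cttD} by declaring it the transcription of Proposition~\ref{ttD} in the opposite category, and your pushout of $j(\alpha)$ along $\varepsilon_D$ with $X=\Ker(\pi)$ is exactly that dual construction. The only cosmetic difference is your final naturality argument for $\varepsilon_X$ being epi, which could be shortened by noting that $X$ is a quotient of $j(T)\in{}^\bot\cS$ and ${}^\bot\cS$ is closed under quotients.
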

\smallskip

The following is a corollary of \ref{MTC}:

\begin{corollary}\label{cDCorGS}
Let  $\cD$  be an abelian category with a distinguished co-Giraud subcategory $\cC$. 
Suppose that $\cD$ is endowed with a torsion pair $(\cX,\cY)$.
Then the class
$j^\la(\cX):=\{C\in\cC \,|\, j(C)\in \cX\}$
is a torsion class on $\cC$.
\end{corollary}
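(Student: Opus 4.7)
The plan is to apply Lemma~\ref{MTC} directly, with the roles of the two categories swapped relative to its original statement. In that lemma the hypothesis on the functor is that it preserves inductive limits; the conclusion is that the preimage of a torsion class is a torsion class. Here the candidate functor is $j\colon\cC\to\cD$, which is a left adjoint (since $j\dashv r$ in the co-Giraud setup), hence automatically preserves all colimits, and in particular respects inductive limits. The class on the target side is $\cX$, a torsion class on $\cD$ by hypothesis. Thus all the hypotheses of Lemma~\ref{MTC} are met.

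Concretely, I would invoke Lemma~\ref{MTC} with the substitutions ``$\cD\leadsto\cC$'', ``$\cC\leadsto\cD$'', ``$l\leadsto j$'', and ``$\cT\leadsto\cX$''. The conclusion is precisely that
\[
j^{\la}(\cX)=\{C\in\cC\mid j(C)\in\cX\}
\]
is closed under inductive limits (because $\cX$ is and $j$ preserves them) and under extensions (because $j$ is right exact, so applying $j$ to a short exact sequence in $\cC$ produces an exact sequence $j(X_1)\to j(C)\to j(X_2)\to 0$, and the cokernel-kernel argument of Lemma~\ref{MTC} shows $j(C)\in\cX$ whenever $j(X_1),j(X_2)\in\cX$). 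This is exactly the assertion that $j^{\la}(\cX)$ is a torsion class on $\cC$.

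There is no real obstacle: the statement is genuinely a corollary, because the only property of $j$ that matters is its preservation of colimits, which follows from its being a left adjoint. The only point worth flagging explicitly in the write-up is the reason $j$ respects inductive limits, namely adjointness; everything else is a direct quotation of Lemma~\ref{MTC}.
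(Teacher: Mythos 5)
Your proof is correct and is exactly the paper's intended argument: the paper simply declares this statement ``a corollary of \ref{MTC}'', and your application of Lemma~\ref{MTC} to the left adjoint $j\colon\cC\to\cD$ (which preserves inductive limits by adjointness) with the torsion class $\cX$ on $\cD$ is precisely that. Nothing further is needed.
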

\smallskip

\begin{proposition}\label{cGL}
Let  $\cD$  be an abelian category with a distinguished co-Giraud subcategory $\cC$. 
Suppose that $\cD$ is endowed with a torsion pair $(\cX,\cY)$, and let 
\[
\begin{matrix}
r(\cX):=\{T\in\cC \,|\, T\cong r(X), \exists \,X\in\cX\} \hfill \\
r(\cY):=\{F\in\cC \,|\, F\cong r(Y), \exists \,Y\in\cY\}\\
\end{matrix}
\]
Then $(r(\cX),r(\cY))$ defines a torsion pair on $\cC$ if and only if $jr(\cX) \subseteq \cX$. In this case, 
$j^\la(\cX)=r(\cX)$.
\end{proposition}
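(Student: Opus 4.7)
The plan is to obtain this proof as a verbatim dualization of Proposition~\ref{GL}, exploiting that $r$ is exact and $\eta \colon \id_\cC \ra r\circ j$ is an isomorphism (since $j$ is fully faithful). The four pieces to assemble are: (1) closure of $r(\cX)$ under inductive limits and extensions, (2) Hom-orthogonality $\cC(r(\cX),r(\cY))=0$, (3) existence of a torsion decomposition for every object of $\cC$, and (4) the converse implication.

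For (1), under the hypothesis $jr(\cX)\subseteq\cX$, I would first check that $r(\cX)=j^\la(\cX)$: the inclusion $r(\cX)\subseteq j^\la(\cX)$ is immediate from $jr(X)\in\cX$, and the reverse one uses $rj\cong\id_\cC$ to write any $T\in j^\la(\cX)$ as $T\cong rj(T)\in r(\cX)$. Then Corollary~\ref{cDCorGS} applied to $\cX$ shows that $j^\la(\cX)$, hence $r(\cX)$, is already a torsion class in $\cC$, giving (1) for free. For (2), take $T=r(X)\in r(\cX)$ and $F=r(Y)\in r(\cY)$ and use the adjunction $(j,r)$:
\[
\cC(T,F)=\cC(r(X),r(Y))\cong\cD(jr(X),Y)=0,
\]
since $jr(X)\in\cX$ by hypothesis and $Y\in\cY$.

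For (3), given $C\in\cC$, pick the torsion decomposition $0\ra X\ra j(C)\ra Y\ra 0$ in $\cD$ with $X\in\cX$, $Y\in\cY$, apply the exact functor $r$, and use $rj(C)\cong C$ to obtain an exact sequence
\[
0\ra r(X)\ra C\ra r(Y)\ra 0
\]
in $\cC$ with $r(X)\in r(\cX)$ and $r(Y)\in r(\cY)$, which is the required decomposition. For (4) the converse direction, assume $(r(\cX),r(\cY))$ is a torsion pair on $\cC$; then for any $X\in\cX$ and any $Y\in\cY$ the adjunction gives
\[
\cD(jr(X),Y)\cong\cC(r(X),r(Y))=0,
\]
so $jr(X)\in{}^\bot\cY=\cX$, i.e.\ $jr(\cX)\subseteq\cX$.

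I do not expect any real obstacle, since the argument mirrors the proof of Proposition~\ref{GL} step by step under the opposite-category dictionary (swap $l\leftrightarrow r$, $i\leftrightarrow j$, unit $\leftrightarrow$ counit, and reverse direction of every Hom and every short exact sequence). The only thing that requires minor vigilance is the direction of the adjunction isomorphism $\cC(C,r(D))\cong\cD(j(C),D)$: in both the orthogonality check and the converse one must use it with the \emph{first} variable of $\cC(-,-)$ in the form $r(X)$, so that $j$ lands on the $\cX$-side, matching the hypothesis $jr(\cX)\subseteq\cX$.
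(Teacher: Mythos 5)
Your proof is correct and is essentially the paper's own argument: the paper proves Proposition~\ref{cGL} precisely by dualizing the proof of Proposition~\ref{GL} (identify $j^\la(\cX)=r(\cX)$ via $r\circ j\cong\id_\cC$ and Corollary~\ref{cDCorGS}, check orthogonality through the adjunction $(j,r)$ using $jr(\cX)\subseteq\cX$, obtain the decomposition by applying the exact functor $r$ to the decomposition of $j(C)$ in $\cD$, and get the converse from $0=\cC(r(X),r(Y))\cong\cD(jr(X),Y)$), which is exactly what you do.
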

\smallskip

>From \ref{cttD} and \ref{cGL} we derive the following correspondence:

\begin{theorem}\label{ctt1-1}
Let  $\cD$  be an abelian category with a distinguished co-Giraud subcategory $\cC$.
There exists a one to one correspondence between  torsion pairs 
$(\cX,\cY)$ on $\cD$ satisfying  
$jr(\cX)\subseteq\cX\subseteq \;^\bot\cS$ 
and torsion pairs $(\cT,\cF)$ on $\cC$.
\end{theorem}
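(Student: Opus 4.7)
The plan is to transcribe the proof of Theorem~\ref{tt1-1} into the opposite category, using Proposition~\ref{cttD} and Proposition~\ref{cGL} in place of \ref{ttD} and \ref{GL}. Concretely, to every torsion pair $(\cT,\cF)$ on $\cC$ we associate the pair $(\hat\cT,\hat\cF)$ of Proposition~\ref{cttD}, and to every torsion pair $(\cX,\cY)$ on $\cD$ satisfying $jr(\cX)\subseteq\cX$ we associate the pair $(r(\cX),r(\cY))$ of Proposition~\ref{cGL}. I will check that, once restricted to the subclass of torsion pairs on $\cD$ satisfying $jr(\cX)\subseteq\cX\subseteq{}^\bot\cS$, these two assignments are mutually inverse.

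Starting from $(\cT,\cF)$ on $\cC$, Proposition~\ref{cttD} ensures $\hat\cT\subseteq{}^\bot\cS$ by definition; moreover, since $r(\hat\cT)=\cT$ and $j(\cT)\subseteq\hat\cT$, we obtain $jr(\hat\cT)=j(\cT)\subseteq\hat\cT$, so $(\hat\cT,\hat\cF)$ indeed lies in the claimed subclass. The identity $(r(\hat\cT),r(\hat\cF))=(\cT,\cF)$ is then recorded directly in Proposition~\ref{cttD}, so one recovers the original torsion pair on $\cC$.

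Starting from $(\cX,\cY)$ on $\cD$ with $jr(\cX)\subseteq\cX\subseteq{}^\bot\cS$, Proposition~\ref{cGL} yields the torsion pair $(r(\cX),r(\cY))$ on $\cC$; the task is to check that the construction of \ref{cttD} applied to it gives back $(\cX,\cY)$, i.e.\ that $\widehat{r(\cX)}:=r^\la(r(\cX))\cap{}^\bot\cS=\cX$ (the equality $\widehat{r(\cY)}=\cY$ will then follow from the uniqueness of the torsion-free class of a torsion pair). The inclusion $\cX\subseteq\widehat{r(\cX)}$ is immediate from the hypothesis $\cX\subseteq{}^\bot\cS$. For the reverse inclusion, take $D\in\widehat{r(\cX)}$ and decompose it via the torsion pair on $\cD$ as $0\to X_D\to D\to Y_D\to 0$ with $X_D\in\cX$, $Y_D\in\cY$. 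Apply the exact functor $r$: the resulting epimorphism $r(D)\twoheadrightarrow r(Y_D)$ has source in $r(\cX)$ and target in $r(\cY)$, and $r(\cX)$ and $r(\cY)$ are the two classes of a torsion pair, hence $r(Y_D)=0$, i.e.\ $Y_D\in\cS$. Since $D\in{}^\bot\cS$, the epimorphism $D\twoheadrightarrow Y_D$ must vanish, so $Y_D=0$ and $D\cong X_D\in\cX$.

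I do not expect any substantial obstacle: every step is the strict dual of an argument already carried out for Giraud subcategories, and the only bookkeeping item is to remember to use the characterization of ${}^\bot\cS$ in terms of the counit $\varepsilon_D\colon jr(D)\to D$ being an epimorphism (as recorded in Section~1), which is precisely what turns ``$Y_D\in\cS$'' into ``$Y_D=0$'' in the last step.
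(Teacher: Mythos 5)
Your proof is correct and takes essentially the same approach as the paper, which proves Theorem~\ref{ctt1-1} simply as the transcription of the proof of Theorem~\ref{tt1-1} into the opposite category via Propositions~\ref{cttD} and \ref{cGL}; your argument that $\widehat{r(\cX)}=\cX$ just spells out the step the paper leaves as ``clear'', and it is sound. (Only a cosmetic point: the Hom-orthogonality definition of ${}^\bot\cS$ already gives $Y_D=0$ in your last step, so the counit characterization you invoke at the end is not needed.)
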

\smallskip

\section{$t$-structures induced by torsion pairs}

\begin{definition}\label{deft-s}
A {\it $t$-structure} on a triangulated category $\cD$ is a pair $t=(\cDl0,\cDg0)$ of strictly full subcategories of $\cD$ such that, 
setting $\cDl n:=\cDl0[-n]$ and $\cDg n:=\cDg0[-n]$, one has
\begin{enumerate}
\item[\rm (0)] $\cDl0\subseteq\cDl1$ and $\cDg0\supseteq\cDg1$.
\item[\rm (i)] $\cD(X,Y)=0$ for every $X$ in $\cDl0$ and every $Y$ in $\cDg1$.
\item[\rm (ii)] For any object $X\in\cD$ there exists a distinguished triangle: 
\[A\to X\to B \to A[1]
\]
in $\cD$ such that
$A\in\cDl0$ 
and $B\in\cDg1$.
\end{enumerate}
\end{definition}
\smallskip
\begin{proposition}{\rm\cite[Proposition 1.3.3]{MR751966}}\label{t-adj}
Let ${\rm t}=(\cDl0,\cDg0)$ be a $t$-structure on a triangulated category $\cD$.
\begin{enumerate}
\item[\rm (i)]The inclusion of $\cDl n$ in $\cD$ admits a right adjoint $\tau^{\leq n}$, and the inclusion of $\cDg n$ in $\cD$ a left adjoint $\tau^{\geq n}$,
 called the truncation functors.
\item[\rm (ii)]For every $X$ in $\cD$ there exists a unique morphism 
$d\colon \tau^{\geq 1}(X)\to \tau^{\leq 0}(X)[1]$ such that the triangle
\[
\tau^{\leq 0}(X)\ra X\ra \tau^{\geq 1}(X)\overset{d}\ra
\]
is distinguished. This triangle is (up to a unique isomorphism) the unique distinguished triangle 
$(A,X,B)$ with $A$ in $\cDl0$ and $B$ in $\cDg1$.
\item[\rm (iii)]The category $\cH_t:=\cDl0\cap\cDg0$ is abelian, and the truncation functors induce a functor 
$\hH_t\colon \cD \to \cH_t$, called the $t$-cohomological functor 
($H_t^0(X)=\tau^{\geq0}\tau^{\leq0}(X)\cong\tau^{\leq0}\tau^{\geq0}(X)$ and for every $i\in \Bbb Z$, 
$\hH_t^i(X)=\hH_t^0(X[i])$, see {\rm\cite[Theorem~1.3.6]{MR751966}}).
\end{enumerate}
\end{proposition}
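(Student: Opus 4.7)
The plan is to bootstrap everything from the following uniqueness/functoriality lemma: given two distinguished triangles
\[
A \to X \to B \to A[1], \qquad A' \to X' \to B' \to A'[1]
\]
with $A, A' \in \cDl 0$ and $B, B' \in \cDg 1$, every morphism $f \colon X \to X'$ lifts to a \emph{unique} morphism of triangles $(g, f, h)$. To prove the lemma, apply $\cD(A, -)$ to the second triangle: axiom (i) of Definition~\ref{deft-s} gives $\cD(A, B') = 0$ directly, and $\cD(A, B'[-1]) = 0$ because $B'[-1] \in \cDg 2 \subseteq \cDg 1$ by axiom (0). The long exact sequence in Hom then forces $\cD(A, A') \xrightarrow{\sim} \cD(A, X')$, producing a unique $g \colon A \to A'$ compatible with $f$. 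The axiom TR3 completes this to a morphism of triangles, and any two choices of $h$ differ by a class in $\cD(A[1], B')$, which vanishes since $A[1] \in \cDl{-1} \subseteq \cDl 0$.

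Items (ii) and the functorial part of (i) then follow formally. Applied with $X = X'$ and $f = \id_X$, the lemma pins down the truncation triangle of Definition~\ref{deft-s}(ii) up to a unique isomorphism, so the connecting morphism $d$ is unique, giving (ii). Applied to a general $f \colon X \to X'$, it makes the assignments $X \mapsto \tau^{\leq 0} X$ and $X \mapsto \tau^{\geq 1} X$ functorial, together with natural transformations $\tau^{\leq 0} \Rightarrow \id_\cD$ and $\id_\cD \Rightarrow \tau^{\geq 1}$. For the adjunctions in (i), apply $\cD(A, -)$ to the truncation triangle of $X$ with $A \in \cDl 0$: the two Hom-vanishings above yield an isomorphism $\cD(A, \tau^{\leq 0} X) \xrightarrow{\sim} \cD(A, X)$, natural in both variables, which is the adjunction making $\tau^{\leq 0}$ right adjoint to the inclusion of $\cDl 0$. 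The statement for $\tau^{\geq 1}$ is dual, and shifting produces $\tau^{\leq n}, \tau^{\geq n}$ for every $n$.

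For (iii), I would show that $\cH_t = \cDl 0 \cap \cDg 0$ is abelian by constructing kernels and cokernels in $\cD$ and truncating. A preliminary observation is that both $\cDl 0$ and $\cDg 0$ are closed under extensions in $\cD$: if $A \to B \to C \to A[1]$ is distinguished with $A, C \in \cDl 0$, then $\cD(B, Y) = 0$ for every $Y \in \cDg 1$ by sandwiching two zeros in the long exact sequence, and the identity $\cDl 0 = {}^\perp \cDg 1$ (which follows from the truncation triangle together with $\cDl{-1} \cap \cDg 1 = 0$) then yields $B \in \cDl 0$. Given $f \colon X \to Y$ in $\cH_t$, form $C = \Cone(f)$; rotating the cone triangle and applying extension closure shows $C \in \cDl 0 \cap \cDg{-1}$. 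I would then define
\[
\Ker(f) := (\tau^{\leq -1} C)[-1], \qquad \Coker(f) := \tau^{\geq 0} C,
\]
both visibly in $\cH_t$, and verify their universal properties via the adjunctions of (i). The functor $\hH_t := \tau^{\geq 0} \tau^{\leq 0}$ is then well defined, and $\tau^{\geq 0} \tau^{\leq 0} \cong \tau^{\leq 0} \tau^{\geq 0}$ follows from the uniqueness lemma applied to the two natural double-truncation triangles of a common object.

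The main obstacle is proving that these kernels and cokernels actually assemble into an abelian-category structure on $\cH_t$: concretely, that the canonical morphism $\Coker(\Ker(f) \to X) \to \Ker(Y \to \Coker(f))$ is an isomorphism in $\cH_t$. This is the heart of the BBD argument and requires a careful application of the octahedral axiom to identify both sides with a single ``image'' object built from appropriate truncations of $C$. Once this is established, the cohomological property of $\hH_t$ on distinguished triangles reduces, via the same octahedral technique and the long exact Hom sequences, to splicing the short exact sequences in $\cH_t$ produced by the truncation triangles.
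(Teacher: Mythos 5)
First, note that the paper does not prove this proposition at all: it is quoted verbatim from Beilinson--Bernstein--Deligne (\cite[Proposition 1.3.3 and Theorem 1.3.6]{MR751966}), so the only available comparison is with the standard argument there. Your treatment of (i) and (ii) is essentially that standard argument and is sound: the uniqueness/functoriality lemma with the two Hom-vanishings $\cD(\cDl0,\cDg1)=0$ and $\cD(\cDl0,\cDg2)=0$ is exactly the right tool, and the adjunction isomorphism $\cD(A,\tau^{\leq 0}X)\cong\cD(A,X)$ for $A\in\cDl0$ follows as you say. One small gloss: ``pinned down up to unique isomorphism'' does not by itself give uniqueness of $d$ for the \emph{fixed} objects and fixed maps $\tau^{\leq 0}(X)\to X\to\tau^{\geq 1}(X)$; you need the extra half-line that the comparison morphism produced by TR3 has identity components (its third component differs from $\id$ by an element of $\cD(\tau^{\leq 0}(X)[1],\tau^{\geq 1}(X))=0$), after which the third square forces $d=d'$. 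The vanishing you invoke is the right one, so this is a presentational rather than a mathematical defect.

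The genuine gap is in (iii), and you acknowledge it yourself: defining $\Ker(f)=(\tau^{\leq -1}\Cone(f))[-1]$ and $\Coker(f)=\tau^{\geq 0}\Cone(f)$ and checking they lie in $\cH_t$ is the easy part, but the statement that $\cH_t$ is \emph{abelian} requires proving that the canonical map from the coimage to the image is an isomorphism, and this is precisely where BBD invoke the octahedral axiom (together with the fact that every monomorphism/epimorphism in $\cH_t$ is obtained from a triangle with third term in $\cH_t[\pm1]$). Your proposal describes what must be shown but does not carry it out, so as written it establishes (i), (ii) and the existence of kernels and cokernels, not abelianness. The same deferral affects the stated isomorphism $\tau^{\geq 0}\tau^{\leq 0}(X)\cong\tau^{\leq 0}\tau^{\geq 0}(X)$: this is \cite[Proposition 1.3.5]{MR751966}, proved by constructing an explicit comparison map via an octahedron, and it does not follow merely from the uniqueness lemma, since the two double truncations sit a priori in triangles over different objects. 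To complete the proof you would need to write out these octahedral arguments (or simply cite \cite[1.3.5--1.3.6]{MR751966}, as the paper does); everything preceding them in your sketch is correct and follows the same route as the source.
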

\smallskip
In particular, given an abelian category $\cC$ its (unbounded) derived category $D(\cC)$ is a triangulated category 
which admits a canonical $t$-structure, called the {\it natural $t$-structure}, whose class $D(\cC)^{\leq 0}$ 
(resp. $D(\cC)^{\geq 0}$) is that of complexes without cohomology in positive (resp. negative) degrees.
We will denote by $\hH\colon D(\cC) \to \cC$ its cohomological functor and by
$t^{\leq n}$ resp. $t^{\geq n}$ its truncation functors.
As explained by A.~Beligiannis and I.~Reiten in their work \cite{MR2327478},
one can regard a $t$-structure on a triangulated category $\cD$ as a generalization
of a torsion pair, where the role of the torsion class is provided by $\cDl0$,
while that of the torsion free class is played by $\cDg1$.
Moreover, given a torsion pair on an abelian category $\cC$ one can construct a 
$t$-structure on its derived category $D(\cC)$, as explained in  \cite{MR1327209}.
\endgraf
Let us briefly recover this construction:
\begin{proposition}
Let  $(\cT,\cF)$  be a  torsion pair on an abelian category $\cC$.
The classes
$$\begin{matrix}
t(\cT)=\cD_{\rm t}^{\leq 0}= & \{ C^\point\in D(\cC)\; | \; H^0(C^\point)\in\cT,\; H^i(C^\point)=0 \;  \forall i>0 \} \hfill\\
t(\cF)=\cD_{\rm t}^{\geq 0}= & \{C^\point\in D(\cC)\; | \; H^{-1}(C^\point)\in\cF,\; H^i(C^\point)=0 \;  \forall i<-1 \} \\
\end{matrix}
$$
define a $t$-structure on $D(\cC)$ which is called the  $t$-structure induced by the torsion pair $t$. 
\end{proposition}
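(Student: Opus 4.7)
The plan is to verify directly the three axioms (0), (i), (ii) of Definition~\ref{deft-s} for the pair $(\cD_t^{\leq 0}, \cD_t^{\geq 0})$, using systematically the natural $t$-structure on $D(\cC)$ (with truncations $t^{\leq n}, t^{\geq n}$ and cohomology $\hH$) together with the defining Hom-vanishing $\cC(\cT,\cF)=0$ of the torsion pair.

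Axiom (0) is immediate from unwinding the shift: an object of $\cD_t^{\leq 0}$ has $\hH^i=0$ for $i>0$, so certainly $\hH^i=0$ for $i>1$ and the ``new'' $\hH^1$ (which must lie in $\cT$) is zero $\in\cT$; dually for $\cD_t^{\geq 1}\subseteq\cD_t^{\geq 0}$.

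For axiom (i), take $X\in\cD_t^{\leq 0}$ and $Y\in\cD_t^{\geq 1}$; by definition $Y$ has cohomology concentrated in degrees $\geq 0$ with $\hH^0(Y)\in\cF$, and $X$ has cohomology concentrated in degrees $\leq 0$ with $\hH^0(X)\in\cT$. Apply $\Hom_{D(\cC)}(-,Y)$ to the natural truncation triangle $t^{\leq -1}(X)\ra X\ra \hH^0(X)[0]\ra t^{\leq -1}(X)[1]$; it suffices to show that both outer Hom's vanish. For $\Hom(t^{\leq -1}(X),Y)$, the source is in the natural $D(\cC)^{\leq -1}$ and the target is in the natural $D(\cC)^{\geq 0}$, so the vanishing is the Hom-vanishing for the natural $t$-structure. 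For $\Hom(\hH^0(X)[0],Y)$, apply $\Hom(\hH^0(X)[0],-)$ to the natural truncation triangle $\hH^0(Y)[0]\ra Y\ra t^{\geq 1}(Y)\ra\hH^0(Y)[1]$: the contribution from $t^{\geq 1}(Y)$ vanishes again by the natural $t$-structure (as $\hH^0(X)[0]\in D(\cC)^{\leq 0}$ and $t^{\geq 1}(Y)\in D(\cC)^{\geq 1}$), while the contribution from $\hH^0(Y)[0]$ reduces to $\Hom_\cC(\hH^0(X),\hH^0(Y))=0$ by $\cC(\cT,\cF)=0$.

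For axiom (ii), given $C^\point\in D(\cC)$ represented by an honest complex $(C^n,d^n)$, consider the torsion pair decomposition $0\ra T\ra \hH^0(C^\point)\ra F\ra 0$ of its zeroth cohomology. Let $\tilde T$ be the preimage of $T$ under $\Ker(d^0)\twoheadrightarrow \hH^0(C^\point)$, so $\tilde T \supseteq \Ima(d^{-1})$ and $\tilde T/\Ima(d^{-1})=T$. Define the subcomplex
\[
A:\quad\cdots\ra C^{-2}\ra C^{-1}\ra \tilde T\ra 0\ra 0\ra\cdots
\]
sitting in degrees $\leq 0$, and the quotient complex
\[
B:\quad\cdots\ra 0\ra 0\ra C^0/\tilde T\ra C^1\ra C^2\ra\cdots
\]
with $C^0/\tilde T$ placed in degree $0$. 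A direct check degree-by-degree yields a short exact sequence of complexes $0\ra A\ra C^\point\ra B\ra 0$, hence a distinguished triangle $A\ra C^\point\ra B\ra A[1]$ in $D(\cC)$. Computing cohomology one finds $\hH^i(A)=\hH^i(C^\point)$ for $i<0$, $\hH^0(A)=T\in\cT$, $\hH^i(A)=0$ for $i>0$, so $A\in\cD_t^{\leq 0}$; and $\hH^i(B)=0$ for $i<0$, $\hH^0(B)=\Ker(d^0)/\tilde T=F\in\cF$, $\hH^i(B)=\hH^i(C^\point)$ for $i>0$, so $B\in\cD_t^{\geq 1}$.

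The only mildly delicate step I anticipate is the last cohomology computation for $B$ at degree $0$: one must carefully identify $\Ker(\bar d^0:C^0/\tilde T\to C^1)$ with $\Ker(d^0)/\tilde T$ and then with $\hH^0(C^\point)/T=F$, which is where the choice to lift the torsion subobject $T$ of $\hH^0(C^\point)$ to the preimage $\tilde T$ (rather than to some arbitrary lift) is essential. Everything else is bookkeeping.
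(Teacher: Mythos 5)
Your proof is correct and follows essentially the same route as the paper: for axiom (i) you reduce, via the natural truncation triangles, to the orthogonality of the natural $t$-structure together with $\cC(\cT,\cF)=0$ (the paper packages the same reduction through the adjunction isomorphisms for $\tau^{\geq 0}$ and $\tau^{\leq 0}$), and for axiom (ii) your $\tilde T$ is exactly the paper's pullback $T\times_{H^0(C^\point)}\Ker(d^0_C)$, giving the same short exact sequence of complexes and hence the same distinguished triangle.
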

\begin{proof}
It is straightforward to verify condition ($0$) of definition \ref{deft-s}.
Let us show that condition (i) holds. Indeed, given
$X^\point \in \cD_{\rm t}^{\leq 0}$ and $Y^\point\in \cD_{\rm t}^{\geq 1}$ (i.e.,$H^0(Y^\point)\in\cF$, and 
$H^i(Y^\point)=0 \;  \forall i<0$) one has
\begin{eqnarray*}
D(\cC)(X^\point,Y^\point)\cong & D(\cC)\left(\tau^{\geq 0}(X^\point),Y^\point\right)\cong 
D(\cC)\left(\tau^{\geq 0}(X^\point),\tau^{\leq 0}(Y^\point)\right)\cong \hfill \cr
\hfill \cong & D(\cC)\left(H^0(X^\point)[0],H^0(Y^\point)[0]\right)\cong \cC\left(H^0(X^\point),H^0(Y^\point)\right)=0 \hfill \cr
\end{eqnarray*}
Finally, let us prove condition (ii). Given any $C^\point\in D(\cC)$, let us consider the object $T=t(H^0(C^\point))$ which is the torsion part of the zero cohomology 
$H^0(C^\point)={{\Ker}(d^0_C)\over{{\Ima}(d^{-1}_C)} }$,
and let us define $X$ to be the fiber product 
\[
X=T\times_{H^0(C^\point)}{\Ker}(d^0_C).
\]
Then we obtain the following functorial construction:
\\
\[
\xymatrix@C=1.7em{
\Ima(d_C^{-1}) \ar@{^{(}->}@/^2pc/[rr]\ar[rd]_{0} \ar@{^{(}.>}[r]^(0.6){\exists !} 
& X \ar@{^{(}->}[r]\ar@{->>}[d] & {\Ker}(d^0_C) \ar@{->>}[d] \\
& T\ar@{^{(}->}[r] & H^0(C^\point)\\
}
\]
where ${X\over{\Ima(d^{-1}_C)}}\cong T$. 
This permits to define the short exact sequence of complexes:
\[
\xymatrix@C=1.7em{
0= \ar[d] & 
[\cdots \ar[r]  & 0 \ar[r]\ar[d] &
 0    \ar[r]\ar[d]  & 0  \ar[r]\ar[d] & 0  \ar[r]\ar[d] &   0\ar[r]\ar[d] &
 \cdots ] \hfill \\
\tau_t^{\leq 0}(C^\point)= \ar[d] &
[\cdots \ar[d]\ar[r]  & C^{-2}\ar[d]^{\cong} \ar[r]^{d_C^{-2}} &
 C^{-1}  \ar[d]^{\cong}  \ar[r]^{d_C^{-1}}  & X \ar[d] \ar[r] & 0  \ar[d]\ar[r] &   0\ar[d]\ar[r] &
 \cdots ] \hfill \\
{\rm id}(C^\point)= \ar[d]& [\cdots \ar[r]\ar[d]  & C^{-2}\ar[d] \ar[r]^{d_C^{-2}} &
 C^{-1}  \ar[d]  \ar[r]^{d_C^{-1}}  & C^0\ar[d]\ar[r]^{d^0_C} & C^1\ar[d]^{\cong}\ar[r]^{d^1_C} & C^2 \ar[d]^{\cong}\ar[r] &   
 \cdots ] \hfill \\
\tau_t^{\geq 1}(C^\point)= \ar[d]&
[\cdots \ar[r]  & 0 \ar[r]\ar[d] & 0\ar[r]\ar[d]  & {C^0\over{X}} \ar[r]^{d^0_C}\ar[d] & C^1\ar[r]^{d^1_C}\ar[d] & C^2 \ar[r] \ar[d]&   
 \cdots ] \hfill \\
 0=  & 
[\cdots \ar[r]  & 0 \ar[r] &
 0    \ar[r]  & 0  \ar[r] & 0  \ar[r]&   0\ar[r] &
 \cdots ] \hfill \\
}
\]
such that 
$\tau_t^{\leq 0}(C^\point)\in \cD_{\rm t}^{\leq 0}$ 
(since 
$H^0(\tau_t^{\leq 0}(C^\point))={X\over{\Ima(d^{-1}_C)}}\cong T\in \cT$ and $H^i(\tau_t^{\leq 0}(C^\point))=0$
for any $i>0$)
and
$\tau_t^{\geq 1}(C^\point)\in \cD_{\rm t}^{\geq 1}$  
(since 
$H^0(\tau_t^{\geq 1}(C^\point))={{\Ker}(d^0_C)\over{X}}\cong {H^0(C)\over T}\in \cF$ and $H^i(\tau_t^{\leq 0}(C^\point))=0$
for any $i<0$).
\endgraf
Let us recall that any short exact sequence of complexes in an abelian category $\cC$ induces a distinguished triangle
in its derived category $D(\cC)$ (see \cite[Section 2.4.2]{MR2286904}). 
In our case the previous exact sequence provides a distinguished triangle
$\tau_t^{\leq 0}(C^\point)\to C^\point\to \tau_t^{\geq 1}(C^\point) \to \tau_t^{\leq 0}(C^\point)[1]$,
and this concludes the proof.
\end{proof}
\smallskip
\begin{remark}
Let $\cC$ be an abelian category endowed with the trivial torsion pair
$(\cC, 0)$. The the $t$-structure associated to this trivial torsion pair is the trivial $t$-strucuture
on $D(\cC)$ (see \ref{t-adj}).
\end{remark}
\smallskip
\begin{remark}\label{Tstr}\label{RTS}
Let $\cC$ be an abelian category with  a torsion pair $(\cT,\cF)$. 
The  {\it heart} associated to the $t$-structure
$(\cD_{\rm t}^{\leq 0},\cD_{\rm t}^{\geq 0})$ on $D(\cC)$
is the full subcategory $\cH_{\cC} := t(\cT)\cap t(\cF)$ of $D(\cC)$ called the tilt of $\cC$ by the torsion pair
$(\cT,\cF)$. 
It is shown in 
\cite{MR751966} that $\cH_{\cC}$ is an abelian category where short exact sequences are deduced by distinguished triangles in $D(\cC)$.
The objects of $\cH_{\cC}$ are represented, up to isomorphism, by complexes of the form
\begin{equation*}
X:\ X^{-1}\overset{x}{\longrightarrow }X^{0},\;\text{ with }\Ker(x)\in \cF\text{ and }
\Coker(x)\in \cT,
\end{equation*}
while a morphism $\phi : X \ra Y$ in $\cH_{\cC}$ is a formal fraction $\phi = (s)^{-1}\circ f$, where:
\begin{enumerate}
\item $X\overset{f}{\longrightarrow}Z$ is a representative of a homotopy class of maps of complexes 
$$
\xymatrix{X^{-1}\ar[d]_{f^{-1}}\ar[r]^x & X^0\ar[d]^{f^0} \\
Z^{-1}\ar[r]^z & Z^0
}
$$
where we recall that $X\overset{f}{\longrightarrow }Z$ is null-homotopic if there is
a map $r^{0}:X^{0}\rightarrow Z^{-1}$ such that%
\begin{equation*}
f^{0}=zr^{0}\text{ \ and \ }f^{-1}=r^{0}x
\end{equation*}
\item $Y\overset{s}{\longrightarrow }Z$ is a quasi-isomorphism, i.e., it is a map of complexes 
which induces isomorphism in cohomology:
$$
\xymatrix{0\ar[r] & \Ker(y) \ar[d]_{\cong} \ar[r] & Y^{-1}\ar[d]_{s^{-1}} \ar[r]^y & Y^0 \ar[d]^{s^0} \ar[r] & \Coker(y)\ar[d]^{\cong} \ar[r] & 0 \\
0\ar[r] & \Ker(z) \ar[r] & Z^{-1} \ar[r]^z & Z^0  \ar[r] & \Coker(z) \ar[r] & 0
}
$$
\end{enumerate}
Every distinguished triangle $X_1^\point\ra X_2^\point \ra X_3^\point \buildrel{[+1]}\over\ra X_1^\point[1]$ 
 in $D(\cC)$ provides a long exact sequence of $t$-cohomology
in the heart $\cH_\cC$:
\begin{equation*}
\cdots H_t^{-1}(X_3)\ra H_t^0(X_1)\ra H_t^0(X_2)\ra H_t^0(X_3)\ra 
H_t^1(X_1)\cdots
\end{equation*}
Moreover given an object $C$ in $\cC$, its $t$-cohomology objects in $\cH_\cC$ are
$H^i_t(C)=0$ for any $t<0$, $t>1$; $H^0_t(C)=t(C)[0]$ is the torsion part of $C$ (with respect
to the torsion pair $(\cT,\cF)$) placed in degree zero, while $H^1_t(C)=\frac{C}{t(C)}[1]$.
The tilted pair $(\cF[1],\cT[0])$ is a torsion pair in $\cH_{\cC}$ with category equivalences 
$\cF[1]\cong\cF$ and $\cT[0]\cong\cT$ (see \cite[Corollary~2.2]{MR1327209}).
\end{remark}
\smallskip

\begin{remark}
In \cite{MR2255195} the authors introduced the notion of a tilting object for an arbitrary Abelian category, proving that for any ring 
$R$ and for any faithful torsion pair $(\cX, \cY)$ in $R$-Mod the heart $\cH(\cX, \cY)$ of the t-structure in 
$D(R)$ associated to $(\cX, \cY)$ is (an abelian category) with a tilting object $T=R[1]$. 
Then, again the first author with Gregorio and Mantese in \cite{MR2275375} showed that the heart is a prototype for these categories, in the sense that an Abelian category 
$\cD$ admits a tilting object $T$ if and only if $\cD$ is equivalent to the category $\cH(\cX, \cY)$ for a suitable torsion pair $(\cX, \cY)$ in $\End(T)$-Mod which is ``tilted'' by $T$, and with Gregorio in \cite{PREPRINT} proved that $\cH(\cX, \cY)$ is a Grothendieck category if and only if the torsion pair is cogenerated by a cotilting module in the sense of \cite{MR1448804}.
This allows us to deal with a more general notion of a``tilting context'': given an Abelian category $\cD$ endowed with a faithful torsion pair $(\cX, \cY)$ (i.e., such that $\cY$ generates $\cD$), we get a new Abelian category $\cH(\cX, \cY)$ endowed with a torsion pair $(\cY[1], \cX[0])$ which is ``tilting'', in the sense that the torsion class $\cY[1]$ cogenerates the category $\cH(\cX, \cY)$ and there are category equivalences $\cY[1]\cong\cY$ and $\cX[0]\cong\cX$ induced by exact functors.
\end{remark}
\smallskip
Let $\cC$ be an abelian category endowed with a torsion pair $(\cT,\cF)$.
Since we need to use different torsion pairs we would use the notation
$(t(\cT),t(\cF))$ instead of $(\cD_{\rm t}^{\leq 0},\cD_{\rm t}^{\geq 0})$ to denote the $t$-structure
associated to the torsion pair $(\cT,\cF)$.
For the same reason when we need to 
clarify the torsion pair we  would denote by $\tau_{t(\cT)}, \tau_{t(\cF)}$ the truncation functors
instead of $\tau_t^{\leq 0},\tau_t^{\geq 1}$.
\smallskip

As showed in \cite{MR2327478}, there exists an injective function between the poset of torsion pairs in $\cC$ and that of $t$-structures in $D(\cC)$.
Moreover one can recover those $t$-structures on $D(\cC)$ which are induced by torsion pairs by means of the following fact proved by Keller and Vossieck in \cite{0663.18005}, and Polishchuk in \cite[Lemma 1.2.2]{MR2324559}.
\smallskip

\begin{theorem}
Given $\cC$ an abelian category. There exists a bijection between
\begin{enumerate}
\item torsion pairs on $\cC$
\item $t$-structures $(\cT^{\leq 0},\cT^{\geq 0})$ on $D(\cC)$ such that
$D(\cC)^{\leq -1}\subset \cT^{\leq 0}\subset D(\cC)^{\leq 0}$.
\end{enumerate}
\end{theorem}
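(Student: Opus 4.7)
The forward direction is already done by the proposition proved just above: given a torsion pair $(\cT,\cF)$ on $\cC$, the classes $(t(\cT),t(\cF))$ form a $t$-structure on $D(\cC)$. The inclusion condition is then immediate from the definition: if $C^\point\in D(\cC)^{\leq -1}$ then $H^0(C^\point)=0\in\cT$ and $H^i(C^\point)=0$ for all $i>0$, so $C^\point\in t(\cT)$; and any $C^\point\in t(\cT)$ lies in $D(\cC)^{\leq 0}$ because $H^i(C^\point)=0$ for $i>0$.

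For the reverse direction, given a $t$-structure $(\cT^{\leq 0},\cT^{\geq 0})$ on $D(\cC)$ with $D(\cC)^{\leq -1}\subset\cT^{\leq 0}\subset D(\cC)^{\leq 0}$, my plan is to set
\[
\cT:=\{T\in\cC \,|\, T[0]\in\cT^{\leq 0}\},\qquad \cF:=\{F\in\cC \,|\, F[0]\in\cT^{\geq 1}\}.
\]
Orthogonality is free from the $t$-structure axiom. For the decomposition, I would observe that the hypothesis gives, by orthogonal complementation, $\cT^{\geq 1}\subset D(\cC)^{\geq 0}$; then for any $C\in\cC$ the new truncation triangle
\[
A=\tau^{\leq 0}(C[0])\lra C[0]\lra \tau^{\geq 1}(C[0])=B
\]
has $A\in D(\cC)^{\leq 0}$ and $B\in D(\cC)^{\geq 0}$. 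Writing the long exact sequence in the natural cohomology $H^\point$ and using $H^i(C[0])=0$ for $i\neq 0$, all $H^i(A)$ and $H^i(B)$ vanish outside degree $0$, so $A\cong H^0(A)[0]$ and $B\cong H^0(B)[0]$. The triangle therefore reduces to a short exact sequence $0\to H^0(A)\to C\to H^0(B)\to 0$ in $\cC$ with $H^0(A)\in\cT$ and $H^0(B)\in\cF$.

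It remains to verify the closure axioms. Closure of $\cT$ under extensions follows from closure of the aisle $\cT^{\leq 0}$ under cones; closure of $\cT$ under quotients (and, more generally, under the inductive limits that exist in $\cC$) is a standard consequence of the orthogonality and the decomposition, by composing the canonical map $C\to C/t(C)$ with an epimorphism out of an object of $\cT$; dually for $\cF$. Finally, to check that the two constructions are mutually inverse: starting from $(\cT,\cF)$ we recover it since $C[0]\in t(\cT)\iff C\in\cT$; starting from $(\cT^{\leq 0},\cT^{\geq 0})$ and producing $(\cT,\cF)$ and then $(t(\cT),t(\cF))$, I would prove $t(\cT)=\cT^{\leq 0}$ via the natural truncation triangle $\tau^{\leq -1}X\to X\to H^0(X)[0]$, where the first term lies in $D(\cC)^{\leq -1}\subset\cT^{\leq 0}$, so $X\in\cT^{\leq 0}$ is equivalent to $H^0(X)[0]\in\cT^{\leq 0}$, i.e.\ to $H^0(X)\in\cT$, which together with $H^i(X)=0$ for $i>0$ is exactly $X\in t(\cT)$.

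The main conceptual step is the decomposition argument: realizing that the two-sided bound $D(\cC)^{\leq -1}\subset\cT^{\leq 0}\subset D(\cC)^{\leq 0}$ forces both truncations to be concentrated in degree $0$, so that the truncation triangle of $C[0]$ in the new $t$-structure actually produces a short exact sequence in the original abelian category $\cC$. The remaining verifications of closure properties and of the bijection are essentially routine, although the condition that $\cT$ be closed under all existing inductive limits (rather than merely under quotients) requires a separate orthogonality-plus-decomposition argument that is the only non-automatic point.
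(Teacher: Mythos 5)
Your proof is correct and follows essentially the same route as the paper: the same classes $\cT=\cC[0]\cap\cT^{\leq 0}$ and $\cF=\cC[0]\cap\cT^{\geq 1}$, and the same key observation that the two-sided bound $D(\cC)^{\leq -1}\subset\cT^{\leq 0}\subset D(\cC)^{\leq 0}$ forces both truncations of $C[0]$ (for the new $t$-structure) to be concentrated in degree $0$, so the truncation triangle yields the torsion decomposition in $\cC$. You additionally spell out the closure axioms and the verification that the two assignments are mutually inverse (via $t(\cT)=\cT^{\leq 0}$), details the paper's terse proof leaves implicit.
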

\begin{proof}
We have just seen that any torsion pair $(\cT,\cF)$ induces
the $t$-structure $(t(\cT),t(\cF)[1])$ on $D(\cC)$ which, by definition, satisfies
$D(\cC)^{\leq -1}\subset t(\cT)\subset D(\cC)^{\leq 0}$.
\\
On the other side given $(\cT^{\leq 0},\cT^{\geq 0})$ a 
$t$-structure on $D(\cC)$ such that
$D(\cC)^{\leq -1}\subset \cT^{\leq 0}\subset D(\cC)^{\leq 0}$ we obtain (by orthogonality) that
$D(\cC)^{\geq 1}\subset \cT^{\geq 1}\subset D(\cC)^{\geq 0}$.
Then the classes $\cT=\cC[0]\cap \cT^{\leq 0}$ and
$\cF=\cC[0]\cap \cT^{\geq 1}$ define a torsion pair in $\cC$
whose approximation short exact sequence for an object $C\in\cC$
is given by the long exact sequence of $\hH$ cohomology for the distinguished triangle $\tau_{t(\cT)}(C)\ra C\ra  \tau_{t(\cF)}(C)$
since $\tau_{t(\cT)}(C)\in \subset D(\cC)^{\leq 0}$ (resp. $\tau_{t(\cF)}(C)\in \subset D(\cC)^{\geq 0}$).
\end{proof}

\smallskip

\section{Tilted Giraud subcategories}

\begin{lemma}\label{DlComm}
Let $\cD$ and $\cC$ be abelian categories and $l \colon \cD \to \cC$ be an exact functor. Suppose that $\cD$ is endowed with a torsion pair $(\cX,\cY)$ and that $(\cT,\cF)=(l(\cX),l(\cY))$ defines a torsion pair on $\cC$. Then
$Dl\circ\tau_{t(\cY)}=\tau_{t(\cF)}\circ Dl$ and $Dl\circ\tau_{t(\cX)}=\tau_{t(\cT)}\circ Dl$. In particular, $Dl$ commutes with the functors $H^0_t$.
\end{lemma}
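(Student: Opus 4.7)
The plan is to exploit the exactness of $l$: since $l$ is exact, it extends canonically to a triangulated functor $Dl \colon D(\cD) \to D(\cC)$ which commutes with ordinary cohomology, $H^i \circ Dl \cong l \circ H^i$. Combined with the hypothesis $l(\cX) = \cT$ and $l(\cY) = \cF$, this will force $Dl$ to map each half of the torsion-pair $t$-structure on $D(\cD)$ into the corresponding half of the one on $D(\cC)$. The two displayed identities for $\tau_{t(\cX)}$ and $\tau_{t(\cY)}$ will then follow from the uniqueness of the truncation triangle, Proposition~\ref{t-adj}(ii).

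First I would check the $t$-exactness of $Dl$, namely $Dl(t(\cX)) \subseteq t(\cT)$ and, dually, that $Dl$ carries the class in which $\tau_{t(\cY)}$ lands (complexes with $H^0 \in \cY$ and $H^i = 0$ for $i < 0$) into the analogous class where $\tau_{t(\cF)}$ lands (complexes with $H^0 \in \cF$ and $H^i = 0$ for $i < 0$). Both inclusions are immediate from the identity $H^i(Dl(Z^\point)) = l(H^i(Z^\point))$ and the hypothesis: for example, if $Z^\point \in t(\cX)$ then $H^0(Z^\point) \in \cX$ and $H^i(Z^\point) = 0$ for $i > 0$, so $H^0(Dl(Z^\point)) \in l(\cX) = \cT$ and the higher cohomologies vanish, giving $Dl(Z^\point) \in t(\cT)$; the argument for $\cY$ is symmetric.

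Next, I would apply $Dl$ to the canonical truncation triangle
\[
\tau_{t(\cX)}(X^\point) \longrightarrow X^\point \longrightarrow \tau_{t(\cY)}(X^\point) \longrightarrow \tau_{t(\cX)}(X^\point)[1]
\]
associated with an arbitrary $X^\point \in D(\cD)$. Since $Dl$ is a triangulated functor, the result is a distinguished triangle in $D(\cC)$ with middle term $Dl(X^\point)$, whose left term lies in $t(\cT)$ and whose right term lies in the class of $\tau_{t(\cF)}$, by the $t$-exactness just verified. The uniqueness of the truncation triangle (Proposition~\ref{t-adj}(ii)) then forces canonical isomorphisms $Dl(\tau_{t(\cX)}(X^\point)) \cong \tau_{t(\cT)}(Dl(X^\point))$ and $Dl(\tau_{t(\cY)}(X^\point)) \cong \tau_{t(\cF)}(Dl(X^\point))$, yielding the two displayed identities.

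The commutation with $H^0_t$ is then a formal consequence: write $H^0_t = \tau^{\ge 0} \tau^{\le 0}$ as in Proposition~\ref{t-adj}(iii). We have already shown that $Dl$ commutes with $\tau^{\le 0} = \tau_{t(\cT)}$ (respectively $\tau_{t(\cX)}$) and with $\tau^{\ge 1} = \tau_{t(\cF)}$ (respectively $\tau_{t(\cY)}$), and the axioms of a $t$-structure give the relation $\tau^{\ge 0}(-) \cong \tau^{\ge 1}((-)[-1])[1]$; since $Dl$ is triangulated it commutes with the shift, hence also with $\tau^{\ge 0}$, and therefore with $H^0_t$. I do not foresee any serious obstacle; the only care required is bookkeeping the index conventions of Definition~\ref{deft-s} against the cohomological descriptions of $t(\cT)$ and $t(\cF)$ (in particular, remembering that $\tau_{t(\cF)}$ lands in $\cDg 1$ rather than in $t(\cF)$ itself).
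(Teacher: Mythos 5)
Your proposal is correct and follows essentially the same route as the paper's proof: establish $t$-exactness of $Dl$ from the exactness of $l$ and the hypotheses $l(\cX)=\cT$, $l(\cY)=\cF$, apply $Dl$ to the truncation triangle, and invoke the uniqueness of the truncation triangle from Proposition~\ref{t-adj}(ii). Your extra care with the degree conventions (noting that $\tau_{t(\cY)}$ and $\tau_{t(\cF)}$ land in the $\geq 1$ part) and the explicit identity $H^i\circ Dl\cong l\circ H^i$ only make explicit what the paper leaves implicit.
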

\begin{proof}
Since $l$ is exact it admits a total derived functor $Dl \colon D(\cD) \to D(\cC)$. Moreover, from $l(\cX)=\cT$ and $l(\cY)=\cF$ we derive that $Dl(t(\cX))\subseteq t(\cT)$ and $Dl(t(\cY))\subseteq t(\cF)$, i.e.,$Dl$ is an exact functor for the $t$-structure $(t(\cX),t(\cY))$ on $D(\cD)$ (see \cite[1.3.16]{MR751966}).
Let $D^\point\in D(\cC)$ and 
\begin{equation}\label{triaD}
\xymatrix{ \tau_{t(\cX)}(D^\point) \ar[r]& D^\point \ar[r]& \tau_{t(\cY)}(D^\point) \ar[r]^{\qquad+1}& 
}
\end{equation}

its distinguished triangle, with $\tau_{t(\cX)}(D^\point) \in t(\cX)$ and $\tau_{t(\cY)}(D^\point) \in t(\cY)$. By applying the functor $Dl$ to (\ref{triaD}) we get the triangle in $D(\cC)$

\begin{equation}\label{triaC}
\xymatrix{ Dl(\tau_{t(\cX)}(D^\point)) \ar[r]& Dl(D^\point) \ar[r]& Dl(\tau_{t(\cY)}(D^\point)) \ar[r]^{\qquad+1}& 
}
\end{equation}

with $Dl(\tau_{t(\cX)}(D^\point)) \in t(\cT)$ and $Dl(\tau_{t(\cY)}(D^\point)) \in t(\cF)$, so (\ref{triaC}) is the distinguished triangle associated to $Dl(D^\point)$, which proves that $Dl\circ\tau_{t(\cY)}=\tau_{t(\cF)}\circ Dl$ and $Dl\circ\tau_{t(\cX)}=\tau_{t(\cT)}\circ Dl$.
\end{proof}
\smallskip

\begin{proposition}\label{SonH}
Let $\cS$ be a Serre subclass in an abelian category $\cD$, and suppose that $\cD$ is endowed with a torsion pair
$(\cX,\cY)$ such that $(l(\cX),l(\cY))$ is a torsion pair on the quotient category
$\cC :=\cD/\cS$.
Then  $l$ induces  a functor $l_\cH \colon \cH_{\cD} \to \cH_{\cC}$ on the associated hearts which 
is exact and essentially surjective and so, denoted by $\cS_\cH$ the kernel of $l_\cH$, one has that $\cS_\cH$
is a Serre subclass of $\cH_\cD$ and $\cH_\cC\cong \cH_\cD/\cS_\cH$.
\end{proposition}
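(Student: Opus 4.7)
The plan is to realize $l_\cH$ as the restriction of the derived functor $Dl\colon D(\cD)\to D(\cC)$ to the heart. Since $l$ is exact, $Dl$ is a triangulated functor, and by Lemma~\ref{DlComm} it is $t$-exact for the tilted $t$-structures associated with $(\cX,\cY)$ and $(\cT,\cF)=(l(\cX),l(\cY))$. Thus $Dl$ sends $\cH_\cD$ into $\cH_\cC$, and I take $l_\cH:=Dl|_{\cH_\cD}$. Its exactness is then automatic, since every short exact sequence in $\cH_\cD$ corresponds to a distinguished triangle in $D(\cD)$ all of whose vertices lie in $\cH_\cD$, and the triangulated functor $Dl$ carries this to a distinguished triangle in $D(\cC)$ with vertices in $\cH_\cC$, hence to a short exact sequence there.

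For essential surjectivity I take $H\in\cH_\cC$ represented, as in Remark~\ref{RTS}, by a two-term complex $X^{-1}\xrightarrow{x}X^0$ with $\Ker(x)\in\cF$ and $\Coker(x)\in\cT$. The quotient functor $l\colon\cD\to\cD/\cS=\cC$ is the identity on objects, and by the classical description of $\Hom_\cC(l(-),l(-))$ the morphism $x$ is represented by an actual morphism $d\colon A'\to X^0/B'$ in $\cD$ with $X^{-1}/A',\,B'\in\cS$. The two-term complex $D^\point=(A'\xrightarrow{d}X^0/B')$ then satisfies $Dl(D^\point)\cong H$ in $D(\cC)$. Applying the cohomological functor $H^0_t$ and invoking Lemma~\ref{DlComm}, I obtain $l_\cH(H^0_t(D^\point))\cong H^0_t(Dl(D^\point))\cong H^0_t(H)\cong H$, the last isomorphism holding because $H$ already lies in the heart; thus $H^0_t(D^\point)\in\cH_\cD$ is the desired preimage.

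With $l_\cH$ exact, $\cS_\cH:=\Ker(l_\cH)$ is automatically a Serre subclass of $\cH_\cD$, since in any short exact sequence $0\to A\to B\to C\to 0$ in $\cH_\cD$ the induced sequence $0\to l_\cH(A)\to l_\cH(B)\to l_\cH(C)\to 0$ in $\cH_\cC$ shows that $l_\cH(B)=0$ iff $l_\cH(A)=l_\cH(C)=0$. By the universal property of the Serre quotient, the exact functor $l_\cH$ factors as $\bar l_\cH\circ Q$ with $Q\colon\cH_\cD\to\cH_\cD/\cS_\cH$, and $\bar l_\cH$ is essentially surjective inherited from $l_\cH$. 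By the classical criterion, $\bar l_\cH$ is an equivalence provided every $\phi$ in $\cH_\cD$ with $l_\cH(\phi)$ an isomorphism has $\Ker(\phi),\Coker(\phi)\in\cS_\cH$; this follows by applying the exact functor $l_\cH$ to the canonical short exact sequences $0\to\Ker(\phi)\to A\to\Ima(\phi)\to 0$ and $0\to\Ima(\phi)\to B\to\Coker(\phi)\to 0$ associated with $\phi$.

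The main subtle point I expect is the essential surjectivity step: although $l$ is essentially surjective (indeed the identity) on objects, constructing the lift $D^\point$ requires using the Gabriel presentation of morphisms in $\cD/\cS$, and then the crucial observation is that one does not need the naive complex $D^\point$ to lie in $\cH_\cD$ a priori, because Lemma~\ref{DlComm} and the passage through $H^0_t$ automatically correct for this.
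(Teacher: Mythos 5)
Your construction of $l_\cH$ matches the paper's: you restrict the derived functor $Dl$ to the heart, get exactness from the $t$-exactness supplied by Lemma~\ref{DlComm}, observe that $\cS_\cH=\Ker(l_\cH)$ is Serre, and prove essential surjectivity by the same $H^0_t$-commutation trick ($l_\cH(H^0_t(D^\point))\cong H^0_t(Dl(D^\point))\cong X^\point$). Indeed, your explicit lift of the two-term complex, using Gabriel's description of a morphism in $\cD/\cS$ as a map $A'\to X^0/B'$ with $X^{-1}/A',\,B'\in\cS$, spells out a point the paper only asserts (the existence of $D^\point\in D(\cD)$ with $Dl(D^\point)\cong X^\point$), so up to this stage your argument is fine and essentially identical to the paper's.

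The gap is in the final step. The ``classical criterion'' you invoke is not a criterion for $\bar l_\cH\colon \cH_\cD/\cS_\cH\to\cH_\cC$ being an equivalence: the condition that every $\phi$ with $l_\cH(\phi)$ invertible has $\Ker(\phi),\Coker(\phi)\in\cS_\cH$ is automatically satisfied by any exact functor relative to its own kernel (precisely by the argument you give), so it carries no information, and together with exactness and essential surjectivity it does not yield fullness of $\bar l_\cH$. For instance, scalar extension $-\otimes_k K\colon \mathrm{Vect}_k\to \mathrm{Vect}_K$ along a proper field extension is exact, essentially surjective, has zero kernel and satisfies your condition, yet the induced functor is not full and is not an equivalence. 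What actually has to be proved --- and what the paper settles only by appealing to \cite[Chapter 3, Section 1, Corollary~2]{0201.35602} --- is that $\bar l_\cH$ is full, i.e.\ that every morphism $l_\cH(A)\to l_\cH(B)$ in $\cH_\cC$ comes from a fraction in $\cH_\cD$ whose ``denominator'' has kernel and cokernel in $\cS_\cH$; this requires working with the concrete description of morphisms in $\cD/\cS$ and in the hearts (as in Remark~\ref{RTS}), or at least verifying the precise hypotheses of the quoted corollary of Gabriel. Without that, the identification $\cH_\cC\cong\cH_\cD/\cS_\cH$ is not established, so you should either quote and check that corollary or supply the fullness argument directly.
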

\begin{proof}
As $l$ is exact and it respects the torsion pairs
its total derived functor $Dl \colon D(\cD) \to D(\cC)$ 
is exact with respect to the $t$-structures associated to the torsion pairs so the 
the restriction of $Dl$ to $\cH_{\cD}$  defines a functor
$l_\cH \colon \cH_{\cD} \to \cH_{\cC}$  on the hearts which is  exact.
In particular the kernel $\cS_\cH$ of $l_\cH$ is a Serre subclass of $\cH_\cD$.
The functor $l_\cH$ is essentially surjective since given an object $X^\point \in \cH_{\cC}$, 
there exists $D^\point \in \cH_{\cD}$ such that $X^\point \cong Dl(D^\point)$. 
Therefore using Lemma~\ref{DlComm} we find that 
\[
\begin{matrix}
X^\point =& H^0_t(X^\point) 
\hfill\cong& H^0_t\circ Dl(D^\point) \hfill \\
\hfill\cong& Dl\circ H^0_t(D^\point)  \cong& l_{\cH}\circ H^0_t(D^\point), \hfill \\
\end{matrix}
\]
which proves that $l_{\cH}$ is essentially surjective, so that
by \cite[Chapter 3, Section 1, Corollary~2]{0201.35602} we get
$\cH_\cC\cong \cH_\cD/\cS_\cH$.
\end{proof}
\smallskip

\begin{theorem}\label{adjhearts}
Let  $\cD$  be an abelian category with a distinguished Giraud subcategory $\cC$ such that 
$i$ admits a right derived functor ${Ri}$. 
Let $(\cX,\cY)$ be a torsion pair on $\cD$ such that $il(\cY)\subseteq\cY$, and let $(\cT,\cF)=(l(\cX),l(\cY))$ be the induced torsion pair on $\cC$.
 Let us denote by $\cH_{\cC}$ and $\cH_{\cD}$ the associated hearts. 
Then there exists a distinguished Giraud subcategory
$(\cH_\cD,\cH_\cC,l_\cH,i_\cH)$ such that 
$i_\cH(l_\cH(\cX[0]))\subseteq \cX[0]$.
\end{theorem}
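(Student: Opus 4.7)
My plan is to define
\[
l_\cH := Dl\big|_{\cH_\cD}\colon \cH_\cD \to \cH_\cC,
\qquad
i_\cH := H^0_t \circ Ri\big|_{\cH_\cC}\colon \cH_\cC \to \cH_\cD.
\]
By Proposition~\ref{SonH}, $l_\cH$ is well-defined, exact and essentially surjective. The remaining tasks are: (a) $Ri$ sends $\cH_\cC$ into $\cD_{\rm t}^{\geq 0}=t(\cY)$, so that $i_\cH$ makes sense; (b) $l_\cH\dashv i_\cH$; (c) the counit $l_\cH i_\cH\to \id_{\cH_\cC}$ is an isomorphism, which gives that $i_\cH$ is fully faithful; (d) $i_\cH l_\cH(\cX[0])\subseteq \cX[0]$.

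For (a), pick $C\in\cH_\cC$ and apply the triangulated functor $Ri$ to the canonical triangle $H^{-1}(C)[1]\to C\to H^0(C)\overset{+1}{\to}$ in $D(\cC)$, with $H^{-1}(C)\in \cF=l(\cY)$ and $H^0(C)\in\cT$. Since $i$ is left exact, $Ri$ preserves the natural $t$-structure on $D(\cD)$, so the long exact sequence of natural cohomology gives $H^j(Ri(C))=0$ for $j<-1$ and $H^{-1}(Ri(C))\cong i(H^{-1}(C))\in il(\cY)\subseteq \cY$. Hence $Ri(C)\in \cD_{\rm t}^{\geq 0}$.

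For (b), using the derived adjunction $Dl\dashv Ri$, for any $W\in\cH_\cD$ and $C\in\cH_\cC$,
\[
\Hom_{\cH_\cC}(l_\cH W,C)=\Hom_{D(\cC)}(Dl(W),C)\cong \Hom_{D(\cD)}(W,Ri(C)).
\]
By (a), $H^0_t Ri(C)=\tau_{t(\cX)}Ri(C)$ and the truncation triangle $H^0_t Ri(C)\to Ri(C)\to \tau_{t(\cY)}Ri(C)\overset{+1}{\to}$ has its third term in $\cD_{\rm t}^{\geq 1}$. Since $W\in\cD_{\rm t}^{\leq 0}$ is orthogonal to both $\cD_{\rm t}^{\geq 1}$ and $\cD_{\rm t}^{\geq 2}$, applying $\Hom_{D(\cD)}(W,-)$ to the triangle gives $\Hom_{D(\cD)}(W,Ri(C))\cong \Hom_{D(\cD)}(W,H^0_t Ri(C))=\Hom_{\cH_\cD}(W,i_\cH C)$.

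For (c), by Lemma~\ref{DlComm}, $l_\cH i_\cH(C)=Dl(H^0_t Ri(C))\cong H^0_t(Dl\circ Ri(C))$; and $Dl\circ Ri\cong \id_{D(\cC)}$ because $l$ is exact and $l\circ i\cong \id_\cC$: for an injective resolution $C\to I^\bullet$ in $\cC$ one has $Ri(C)\simeq i(I^\bullet)$ (since $i$ preserves injectives, being right adjoint to an exact functor), and $Dl(i(I^\bullet))=l(i(I^\bullet))\simeq I^\bullet\simeq C$. Thus $l_\cH i_\cH(C)\cong H^0_t(C)=C$. For (d), given $X\in\cX$ we have $l_\cH(X[0])=l(X)[0]$, whence $i_\cH l_\cH(X[0])=H^0_t(Ri(l(X)))$. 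Since $l(X)\in\cC$, $Ri(l(X))\in D(\cD)^{\geq 0}$, and the explicit formula for $\tau_{t(\cX)}$ recalled in the proof of the induced $t$-structure, applied to a complex with vanishing negative natural cohomology, yields $\tau_{t(\cX)}Ri(l(X))\cong t(il(X))[0]$, the torsion part of $il(X)$ with respect to $(\cX,\cY)$ placed in degree $0$; in particular $i_\cH l_\cH(X[0])\in \cX[0]$. The main obstacle is (a), since once the interaction of $Ri$ with the tilted $t$-structure is controlled, the remaining steps follow from standard derived-adjunction manipulations.
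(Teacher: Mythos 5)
Your construction is the same as the paper's: $l_\cH$ is the restriction of $Dl$ via Proposition~\ref{SonH}, $i_\cH$ is $\tau_{t(\cX)}\circ Ri$ restricted to $\cH_\cC$ (your $H^0_t\circ Ri$ agrees with this once (a) is known), the adjunction comes from composing $Dl\dashv Ri$ with the truncation adjunction, full faithfulness comes from $l_\cH i_\cH\cong\id_{\cH_\cC}$ via Lemma~\ref{DlComm}, and (d) is the paper's last display made explicit; your step (a) is in fact a more careful version of the paper's one-line claim that $Ri$ sends $t(\cF)$ into $t(\cY)$, since you correctly invoke the hypothesis $il(\cY)\subseteq\cY$. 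The one point where you diverge is the justification of $Dl\circ Ri\cong\id_{D(\cC)}$: computing $Ri(C)\simeq i(I^\bullet)$ from an injective resolution presupposes that $\cC$ has enough injectives, which is not among the hypotheses of the theorem (only the existence of the right derived functor $Ri$ is assumed; the remark following the theorem makes clear that the enough-injectives situation is just one instance). The paper instead deduces $Dl\circ Ri\cong R(l\circ i)\cong R(\id_\cC)\cong\id_{D(\cC)}$ from the exactness of $l$ and the composition theorem for derived functors (with a citation), which works in the stated generality; replacing your resolution argument by this one repairs the only real over-assumption. Two minor remarks: the parenthetical reason ``$i$ preserves injectives, being right adjoint to an exact functor'' is true but not what is needed there (what matters is that injective objects are adapted for computing $Ri$), and in (c) concluding full faithfulness of $i_\cH$ from the existence of \emph{some} isomorphism $l_\cH i_\cH\cong\id$ rather than from the counit is exactly what the paper does as well, so it is not a point of difference.
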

\begin{proof}First we remark that since $l$ and $i$ are additive, they extend to an adjunction $\xymatrix{K(\cC) \ar@<-0.5ex>[r]_{i}& K(\cD)\ar@<-0.5ex>[l]_{l}}$ between the homotopy categories. Moreover, since $l$ is exact it admits a total derived functor $Dl\colon D(\cD)\to   D(\cC)$. Therefore $\xymatrix{D(\cC) \ar@<-0.5ex>[r]_{Ri}&D(\cD)\ar@<-0.5ex>[l]_{Dl}}$ are two adjoint functors (with $Dl$ left adjoint of $Ri$) by \cite[Section 3.1]{MR2384608}, and $Dl\circ Ri\cong R(l\circ i)\cong id_{D(\cC)}$.

By Proposition~\ref{SonH}, $l$ induces  a functor $l_\cH \colon \cH_{\cD} \to \cH_{\cC}$ on the associated hearts which 
is exact and essentially surjective and so $\cH_\cC\cong \cH_\cD/\cS_\cH$.


On the other hand, the fact that $i$ is left exact ensures that $Ri$ takes $t(\cF)$ inside $t(\cY)$. 
Let $\tau_{t(\cX)} \colon D(\cD)\to t(\cX)$ be the right adjoint of the inclusion 
$t(\cX)\to D(\cD)$ (see \cite[Proposition~1.3.3.(i)]{MR751966}). Then the restriction of the composition $\tau_{t(\cX)}\circ Ri$ to $\cH_{\cC}$ gives a functor $i_\cH \colon \cH_{\cC}\to \cH_{\cD}$ and it is easy to see that $l_\cH$ is left adjoint of $i_\cH$ by composing the previous adjunctions.

Next, using Lemma~\ref{DlComm} we have that


$$\begin{matrix}
l_\cH \circ i_\cH =& Dl \circ \tau_{t(\cX)}\circ Ri_{|\cH_{\cC}} \cong& 
\tau_{t(\cT)}\circ Dl \circ Ri_{|\cH_{\cC}}\hfill \\
\hfill\cong& \tau_{t(\cT)}\circ D(l \circ i)_{|\cH_{\cC}}\cong& \tau_{t(\cT)}\circ id_{\cH_{\cC}}  \hfill \\
\hfill\cong& id_{\cH_\cC}\hfill \\
\end{matrix}
$$
 and from this we conclude that $i_\cH$ is fully faithful.
 
Finally, 
\[
i_\cH \circ l_\cH (\cX[0]) \subseteq \tau_{t(\cX)}\circ (Ri \circ Dl)(D^{\geq0}(\cD)) \subseteq
\tau_{t(\cX)}(D^{\geq0}(\cD)) \subseteq \cX[0].
\]

\end{proof}
\smallskip

\begin{remark}
Let us explain two examples in which one can apply the previous result. 
As a first example let $\cC$ be an abelian category satisfying $AB4^*$ (that is, small products exist in $\cC$ and such products are exact in $\cC$) and with enough injectives, and 
$i \colon \cC \to \cD$ is an additive functor. Then the right derived functor $Ri \colon D(\cC) \to D(\cD)$ exists by \cite[APPLICATION 2.4]{MR1214458}. 
Another interesting case is the one in which the category  $\cC$ admits enough $i$-acyclic objects. 
In this case one can use the same argument as in Proposition~\ref{adjhearts} restricted to the bounded below derived categories in order to obtain the same result.
\end{remark}
\smallskip

Dually, we have:
\begin{theorem}\label{cadjhearts}
Let  $\cD$  be an abelian category with a distinguished co-Giraud subcategory $\cC$ such that 
$j$ admits a left derived functor ${Lj}$. 
Let $(\cX,\cY)$ a torsion pair on $\cD$ such that $jr(\cX)\subseteq\cX$, and let $(\cT,\cF)=(r(\cX),r(\cY))$ be the induced torsion pair on $\cC$. Let us denote by $\cH_{\cC}$ and $\cH_{\cD}$ the associated hearts. 
Then there exists a distinguished co-Giraud subcategory
$(\cH_\cD,\cH_\cC,r_\cH,j_\cH)$ such that 
$j_\cH(r_\cH(\cY[1]))\subseteq \cY[1]$.
\end{theorem}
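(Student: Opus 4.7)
The plan is to transcribe the proof of Theorem~\ref{adjhearts} into the opposite category, swapping $(l, i, Ri)$ for $(r, j, Lj)$ and the truncation $\tau_{t(\cX)}$ for $\tau_{t(\cY)}$. First I lift the adjunction $(j, r)$ to an adjunction between the derived categories: since $r$ is exact, $Dr$ is induced directly at the level of complexes, and $Lj$ exists by hypothesis; the derived adjunction $(Lj, Dr)$ then follows from \cite[Section 3.1]{MR2384608}. Moreover $Dr \circ Lj \cong L(r \circ j) \cong \id_{D(\cC)}$ since $r \circ j \cong \id_{\cC}$.

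Next, by the dual of Proposition~\ref{SonH} applied to the exact functor $r$ and the Serre class $\Ker(r)$, the functor $r$ induces an exact, essentially surjective functor $r_\cH := Dr|_{\cH_\cD} \colon \cH_\cD \to \cH_\cC$. For the fully faithful left adjoint, observe that $j$ is right exact (being a left adjoint), so $Lj$ preserves $D^{\leq 0}$; together with $j(\cT) = j(r(\cX)) \subseteq \cX$, this yields $Lj(t(\cT)) \subseteq t(\cX)$. Letting $\tau_{t(\cY)}$ denote the left adjoint of the inclusion $t(\cY) \hookrightarrow D(\cD)$ provided by Proposition~\ref{t-adj}, I define $j_\cH := \tau_{t(\cY)} \circ Lj|_{\cH_\cC}$; a standard truncation-triangle analysis (applying $\tau^{\geq 0}$ to an object of $D^{\leq 0}$) shows that $j_\cH$ takes values in $t(\cX) \cap t(\cY) = \cH_\cD$.

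The adjunction $(j_\cH, r_\cH)$ follows by composing $(Lj, Dr)$ with (inclusion, $\tau_{t(\cY)}$): for $C \in \cH_\cC$ and $D \in \cH_\cD \subseteq t(\cY)$,
\[
\Hom_{\cH_\cD}(j_\cH(C), D) \cong \Hom_{D(\cD)}(Lj(C), D) \cong \Hom_{D(\cC)}(C, Dr(D)) = \Hom_{\cH_\cC}(C, r_\cH(D)).
\]
Fully faithfulness of $j_\cH$ reduces to $r_\cH \circ j_\cH \cong \id_{\cH_\cC}$; here I would invoke the dual of Lemma~\ref{DlComm}, namely $Dr \circ \tau_{t(\cY)} \cong \tau_{t(\cF)} \circ Dr$, to get $r_\cH \circ j_\cH \cong \tau_{t(\cF)} \circ Dr \circ Lj|_{\cH_\cC} \cong \tau_{t(\cF)}|_{\cH_\cC} \cong \id_{\cH_\cC}$, the last isomorphism holding because $\cH_\cC \subseteq t(\cF)$. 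Finally, the compatibility $j_\cH(r_\cH(\cY[1])) \subseteq \cY[1]$ follows by degree tracking dual to the Giraud case: $\cY[1] \subseteq D^{\leq -1}(\cD)$, $Dr$ exact preserves $D^{\leq -1}$, $Lj$ preserves $D^{\leq -1}$ by right exactness of $j$, and a long exact sequence on the truncation triangle gives $\tau_{t(\cY)}(D^{\leq -1}(\cD)) \subseteq t(\cY) \cap D^{\leq -1}(\cD) = \cY[1]$. The main point requiring care is the dualization of Lemma~\ref{DlComm} to the left derived setting, i.e.~the intertwining of $Dr$ with the truncations $\tau_{t(\cY)}$ and $\tau_{t(\cF)}$; once that is in place, the rest is routine bookkeeping of the tilted $t$-structure.
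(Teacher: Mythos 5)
Your proposal is correct and is essentially the paper's own proof: the paper proves Theorem~\ref{cadjhearts} by dualizing Theorem~\ref{adjhearts}, and your transcription (derived adjunction $(Lj,Dr)$, $r_\cH=Dr|_{\cH_\cD}$, $j_\cH=\tau_{t(\cY)}\circ Lj|_{\cH_\cC}$, intertwining of $Dr$ with truncations, degree tracking for $j_\cH r_\cH(\cY[1])\subseteq\cY[1]$) matches it step by step. One small remark: since $r$ is itself exact and carries $(\cX,\cY)$ to the torsion pair $(\cT,\cF)$, Lemma~\ref{DlComm} and Proposition~\ref{SonH} apply verbatim with $l$ replaced by $r$, so no dualization of those auxiliary results is actually needed.
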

\smallskip

The next result shows that the contexts described in Theorems~\ref{adjhearts} and \ref{cadjhearts} are as general as possible.

\begin{theorem}\label{reconstruction}
Let $\cD$ be an abelian category endowed with a torsion pair
$(\cX,\cY)$ and let $\cH_{\cD}$ be the corresponding heart with respect to the
$t$-structure on $D(\cD)$ induced by $(\cX,\cY)$.
Let $\cS'$ be a Serre subcategory of $\cH_\cD$ and
$l':\cH_\cD\to \cC':=\cH_\cD/\cS'$ be its corresponding quotient functor,
and let us suppose that $i'l'(\cX[0])\subseteq X[0]$ ( i.e., $(l'(\cY[1]),l'(\cX[0]))$ is a torsion pair on $\cC'$).
Then:
\begin{enumerate}
\item  The class 
$\cS=\{ D\in \cD\; | \; l'(H_t^i(D))=0 \; \forall i\in \Bbb Z\}$
is a Serre subcategory 
of $\cD$.
\item Denoted by $\cC:=\cD/\cS$ the quotient category and by
$l:\cD\ra \cC$ the quotient functor, then $l$ is exact 
and the classes $(l(\cX),l(\cY))$ define a torsion pair on $\cC$.
\item There is an equivalence of categories
$\cC' \buildrel{\cong}\over\rightarrow\cH_\cC$ 
for whom  $l_\cH$ (defined in \ref{adjhearts}) is identified with $l'$. 
\item 
Moreover in the case in which  the torsion-free class $\cY$ generates  $\cD$
(or dually if the torsion class $\cX$ cogenerates  $\cD$) and
if $(\cH_\cD,\cC',l',i')$ is a distinguished  Giraud (resp. $(\cH_\cD,\cC',l',j')$ co-Giraud) 
subcategory such that $i'$ (resp. $j'$) admits  a derived functor,
then
the functor $l$ admits a right adjoint $i$ (resp. left adjoint $j$)
such that the $(\cD,\cC,l,i)$ is a  
distinguished Giraud subcategory of $\cD$ which induces the distinguished Giraud
(resp. co-Giraud)
subcategory $\cC'$
of $\cH_\cD$.
\end{enumerate}
\end{theorem}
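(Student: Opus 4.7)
The plan is to build (1)--(4) in sequence, using throughout that for any $D \in \cD$ only $H_t^0(D) = t(D)[0]$ and $H_t^1(D) = (D/t(D))[1]$ are possibly non-zero in $\cH_\cD$. For (1), a short exact sequence $0 \to D_1 \to D_2 \to D_3 \to 0$ in $\cD$ yields a six-term long exact sequence of $H_t^i$-cohomologies in $\cH_\cD$; applying the exact functor $l'$ and using that $\cS' = \Ker(l')$ is Serre gives the three Serre closure properties of $\cS$ immediately.

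For (2), the quotient functor $l$ is automatically exact, and the approximation sequence for $C = l(D) \in \cC$ comes by applying $l$ to the torsion sequence of $D$. The main obstacle of the whole proof is the orthogonality $\cC(l(X), l(Y)) = 0$ for $X \in \cX$ and $Y \in \cY$: I will represent a morphism $\phi \colon l(X) \to l(Y)$ by a Gabriel roof $f\colon X' \to Y/Y''$ with $X/X' \in \cS \cap \cX$ and $Y'' \in \cS \cap \cY$, decompose $X'$ by its torsion sequence and $Y/Y''$ via the torsion-free saturation of $Y''$ in $Y$, and track the resulting pieces through $H_t^i$ into $\cC'$. The hypothesis that $(l'(\cY[1]), l'(\cX[0]))$ is a torsion pair on $\cC'$ supplies the vanishing $\cC'(l'(\cY[1]), l'(\cX[0])) = 0$ needed to force $\Ima(f) \in \cS$ after shrinking $X'$ and enlarging $Y''$. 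The care required stems from the fact that torsion classes are not closed under subobjects and torsion-free classes are not closed under quotients, so the filtrations in $\cD$ and in $\cH_\cD$ must be tracked simultaneously.

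For (3), with (2) at hand, Proposition~\ref{SonH} provides an exact essentially surjective $l_\cH \colon \cH_\cD \to \cH_\cC$ with $\cH_\cC \cong \cH_\cD/\Ker(l_\cH)$. To identify $\Ker(l_\cH) = \cS'$, I realize $E \in \cH_\cD$ as a two-term complex $[E^{-1} \to E^0]$ with $K := \Ker(E^{-1} \to E^0) \in \cY$ and $Q := \Coker(E^{-1} \to E^0) \in \cX$: since $l$ is exact, $l_\cH(E) = Dl(E) \cong 0$ iff $l(K) = 0 = l(Q)$ in $\cC$, i.e., iff $K[1], Q[0] \in \cS'$; but $K[1]$ and $Q[0]$ are precisely the torsion and torsion-free parts of $E$ in the $\cH_\cD$-torsion pair, so equivalently $E \in \cS'$ by Serreness. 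The resulting equivalence $\cC' \cong \cH_\cC$ identifies $l_\cH$ with $l'$.

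For (4), under the generation hypothesis on $\cY$ (the cogeneration/co-Giraud case being dual) and the derivability of $i'$, I will reverse the construction of Theorem~\ref{adjhearts}: for $C \in \cC$, decompose $C$ via the torsion pair $(l(\cX), l(\cY))$, identify the heart pieces in $\cC' \cong \cH_\cC$, apply $i'$ to lift them to $\cH_\cD$, and reassemble via the connecting morphism of the corresponding triangle in $D(\cD)$ to obtain an object of $\cD$. The generation of $\cD$ by $\cY$ is precisely what forces the reassembled object to lie in degree zero, i.e.\ in $\cD \subseteq D(\cD)$, rather than producing a higher-length complex; this yields a fully faithful right adjoint $i \colon \cC \to \cD$ such that $(\cD, \cC, l, i)$ is a distinguished Giraud subcategory, and applying Theorem~\ref{adjhearts} to this structure recovers the prescribed Giraud subcategory $\cC'$ of $\cH_\cD$.
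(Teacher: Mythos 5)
Your treatment of parts (1)--(3) follows the paper's route (six-term $t$-cohomology sequence, Gabriel-roof reduction, identification of $\Ker(l_\cH)$ with $\cS'$), but in (1) the word ``immediately'' hides the one genuinely delicate point: closure of $\cS$ under subobjects and quotients does \emph{not} follow from exactness of $l'$ and Serre-ness of $\cS'$ alone. Applying $l'$ to the six-term sequence attached to $0\to D_1\to D_2\to D_3\to 0$ with $D_2\in\cS$, the connecting map leaves you with an isomorphism $l'(t(D_3)[0])\cong l'\bigl((D_1/t(D_1))[1]\bigr)$, i.e.\ an object lying in $l'(\cX[0])\cap l'(\cY[1])$; it is exactly the standing hypothesis that $(l'(\cY[1]),l'(\cX[0]))$ is a torsion pair on $\cC'$ that makes this intersection zero and lets you conclude $D_1,D_3\in\cS$. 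You should make this explicit, since it is the only place in (1) where that hypothesis enters. Similarly, in (2) the way to finish (and the paper's way) is to show that the discarded pieces $X/t(X')$ and $Y''$ lie in $\cS$ --- which uses only $X/X',\,Y'\in\cS$ and the reduced cohomology sequences --- after which the new representative is a morphism in $\cD$ from an object of $\cX$ to an object of $\cY$, hence literally zero; your appeal to $\cC'(l'(\cY[1]),l'(\cX[0]))=0$ ``to force $\Ima(f)\in\cS$'' does not describe an actual mechanism, since there is no evident comparison map from $\cC(l(X),l(Y))$ to Hom groups in $\cC'$.

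The genuine gap is in (4). You propose to build the right adjoint $i$ by hand: decompose $C\in\cC$ by its torsion pair, lift the pieces through $i'$, reassemble along a connecting morphism in $D(\cD)$, and assert that generation of $\cD$ by $\cY$ forces the result to be concentrated in degree zero. None of the essential points is addressed: (a) why the reassembled complex lies in $\cD[0]$ --- this is precisely the nontrivial ``double tilt'' statement that when $\cY$ generates $\cD$ the heart of $\cH_\cD$ with respect to $(\cY[1],\cX[0])$ is equivalent to $\cD$ (and $\cH_{\cH_\cC}\cong\cC$), which the paper does not reprove but imports from \cite[Theorem 8.2]{MR2486794}; (b) functoriality of your object-by-object assignment; (c) the adjunction $\cD(D,i(C))\cong\cC(l(D),C)$ is never verified. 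The paper sidesteps all of this by applying the already-proved Theorem~\ref{adjhearts} to the distinguished Giraud subcategory $(\cH_\cD,\cC',l',i')$ equipped with the tilted torsion pair, and then transporting the resulting adjoint pair along the equivalences $\cH_{\cH_\cD}\simeq\cD$ and $\cH_{\cC'}\simeq\cH_{\cH_\cC}\simeq\cC$. If you insist on a direct construction, you must either prove the degree-zero concentration claim (which amounts to reproving the double-tilt theorem) or cite it, and then still check naturality and the unit/counit identities; as written, this step is an assertion rather than a proof.
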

\begin{proof}

{$1.$}
We have to prove that given a short exact sequence $0\ra S_1\ra S\ra S_2\ra 0$ in $\cD$
the middle term $S$ belongs to $\cS$ if and only if $S_1, S_2\in \cS$ where $\cS$
is defined as $\cS=\{ D\in \cD\; | \; l'(H_t^i(D))=0 \; \forall i\in \Bbb Z\}$.
Now, any short exact sequence on $\cD$ defines a distinguished triangle in $D(\cD)$ and so
one obtain the long exact sequence in $\cH_\cD$
\begin{equation}\label{LES}
\scriptstyle{\cdots H_t^{-1}(S_2)\ra H_t^0(S_1)\ra H_t^0(S)\ra H_t^0(S_2)\ra 
H_t^1(S_1)\ra H_t^1(S)\ra H_t^1(S_2)\ra H_t^2(S_1)\cdots}
\end{equation}
By \ref{RTS},
$H_t^{-1}(S_2)=0=H_t^2(S_1)$ and for any $D\in\cD$ one has
$H^0(D)=t(D)[0]$ as a complex concentrated in degree $0$  while 
$H^1(D)={D\over{t(D)}}[1]$.
So the sequence (\ref{LES}) reduces to the sequence in $\cH_\cD$
\begin{equation}\label{LES1}
0\ra t(S_1)[0]\ra t(S)[0]\ra t(S_2)[0]\ra 
{S_1\over{t(S_1)}}[1]\ra {S\over{t(S)}}[1]\ra {S_2\over{t(S_2)}}[1]\ra 0.
\end{equation}

Let us recall that the class 
\begin{equation}\label{SerreSC}
\cS'=\{E\in \cH_\cD\; | \; l'(E)=0\}
\end{equation}
is a Serre subcategory of  $\cH_\cD$. 
So from one side it is clear that if $S_1, S_2\in\cS$ then
$t(S_i)[0], {S_i\over{t(S_i)}}[1]\in \cS'$ for any $i\in \{1,2\}$,
which implies that $t(S)[0]$  and $\frac{S}{t(S)}[1]$ belong to $\cS'$,
and so $S\in \cS$.

On the other side if $S\in\cS$ then $t(S)[0], {S\over{t(S)}}[1]\in \cS'$,
%
and by applying the functor $l'$ (which is exact by hypothesis) to (\ref{LES}) 
we obtain the exact sequence in $\cC'$
$$0\ra l'(t(S_1)[0])\ra 0 \ra l'(t(S_2)[0]) \ra l'\left({S_1\over{t(S_1)}}[1]\right)\ra 0\ra
 l'\left({S_2\over{t(S_2)}}[1]\right)\ra 0.
$$
This proves that $t(S_1)[0],{S_2\over{t(S_2)}}[1]\in \cS'$ and 
$ l'(t(S_2)[0])\cong l'\left({S_1\over{t(S_1)}}[1]\right)\in
l'(\cX[0])\cap l'(\cY[1])=0
$
which proves that 
$t(S_2)[0], {S_1\over{t(S_1)}}[1]\in \cS'$ 
and so $S_2\in\cS$ and $S_1\in \cS$.



%




\smallskip

{$2.$}
Let us show that the classes $(l(\cX),l(\cY))$ define a torsion pair on $\cC$. First of all, since any object of $\cC$ may be regarded as an object of $\cD$ and the functor $l$ is exact, it is clear that any object $C\in\cC$ is the middle term of a short exact sequence $0 \to X \to C \to Y \to 0$ with $X\in l(\cX)$ and $Y\in l(\cY)$. It remains to show that $\cC(X,Y)=0$,for every $X\in \cX$ and every $Y\in \cY$. So let  $X\in l(\cX)$ and $Y\in l(\cY)$. A morphism $\varphi\colon X \to Y$ in $\cC$ may be viewed as the class of a morphism $X' \to Y/Y'$ in $\cD$, where $X/X'$ and $Y'$ are in $\cS$. Let $t(X')$ be the torsion part of $X'$ (viewed as an object of $\cD$) with respect to the torsion pair $(\cX,\cY)$ in $\cD$ and $Y/Y''$ be the torsion-free quotient of $Y/Y'$. We show that the composite morphism $t(X')\to X' \to Y/Y' \to Y/Y''$ also represents the morphism $\varphi$ 
in $\cC$, i.e., $X/t(X')\in \cS$ and $Y''\in\cS$. Hence $\varphi=0$, since it is a morphism from a torsion to a torsion-free object. Now, the short exact sequence in $\cD$
\[
0 \to {X'\over t(X')} \to {X\over t(X')} \to {X\over X'} \to 0
\]
defines a distinguished triangle in $D(\cD)$ and so one obtains the long exact sequence of cohomology in $\cH_\cD$
\begin{equation*}
\textstyle
{\cdots\ H_t^0\left({X'\over t(X')}\right)\ra H_t^0\left({X\over t(X')}\right)\ra H_t^0\left({X\over X'}\right)\ra 
H_t^1\left({X'\over t(X')}\right)\ra H_t^1\left({X\over t(X')}\right)\ \cdots}
\end{equation*}
which reduces to 
\begin{equation*}
0\ra {X\over t(X')}[0]\ra {X\over X'}[0]\ra {X'\over t(X')}[1]\ra 0
\end{equation*}
since $H_t^0({X'\over t(X')})=t\left(X'\over t(X')\right)[0]=0$ and 
$H_t^1({X\over t(X')}) = {{X/t(X')}\over{t(X/t(X'))}}[1]=0$ (since $X\in\cX$).
By applying the exact functor $l'$ we obtain the exact sequence in $\cC'$
\begin{equation*}
0\ra l'\left({X\over t(X')}[0]\right)\ra l'\left({X\over X'}[0]\right)\ra l'\left({X'\over t(X')}[1]\right) \ra 0
\end{equation*}
where $l'\left({X\over X'}[0]\right)=0$ because $X/X' \in \cS$. 
Hence $l'\left({X\over t(X')}[0]\right)=0=l'\left({X'\over t(X')}[1]\right)$. 
In particular, $X/t(X') \in \cS$. A dual argument shows that $Y'' \in \cS$.

\smallskip

{$3.$} Given a distinguished Giraud subcategory
$(\cH_{\cD},\cC',l',i')$, one can identify $\cC'$ with the quotient category of
$\cH_\cD$ with respect to its Serre subcategory $\cS'$ defined in \ref{SerreSC}.
Applying Proposition~\ref{SonH} we see that the functor $l$ previously defined induces an exact essentially surjective functor
$l_{\cH}:\cH_\cD\rightarrow \cH_\cC$, and this proves that $\cH_\cC\cong \cH_\cD/\cS_\cH$
where $\cS_{\cH}$ is the kernel of the functor $l_\cH$.
In order to conclude the proof of this third statement it is enough to prove that 
$\cS_{\cH}$  coincides with $\cS'$.

An object $X^{-1}\overset{x}{\longrightarrow }X^{0}$ in $\cH_\cD$ is in the kernel of $l_\cH$ if and only if the complex $l(X^{-1})\overset{l(x)}{\longrightarrow }l(X^{0})$ is zero in $\cH_\cC$, that is:  
$\Ker(l(x))=l(\Ker(x))=0$ and $\Coker(l(x))=l(\Coker(x))=0$. 
This proves that $\Ker(x) \in \cS\cap\cY$ which is equivalent to $\Ker(x)[1]=H^1_t(\Ker(x))\in \cS'$,
and $\Coker(x) \in \cS\cap\cX$  which is equivalent to $\Coker(x)[0]=H^0_t(\Coker(x))\in \cS'$.
So $X^{-1}\overset{x}{\longrightarrow }X^{0}$ belongs to $\cS'$.

\smallskip

{$4.$} 
Let us suppose that the torsion-free class $\cY$ generates  $\cD$.
Then it is clear that $l(\cY)$ generates the quotient category $\cD/\cS$
and so by \cite[Theorem 8.2]{MR2486794}
the double heart $\cH_{\cH_{\cD}}$ is equivalent to $\cD$ and
$\cH_{\cH_{\cC}}\cong \cC$.

If, moreover, $(\cH_\cD,\cC',l',i')$ is a distinguished Giraud  subcategory 
such that $i'$  admits  a derived functor,
then we can apply
Theorem~\ref{adjhearts} in order to obtain a distinguished Giraud  subcategory  on the associated hearts.
This proves that
the functor $l\cong l_{l_{\cH}}$ admits a right adjoint $i$ 
such that $(\cD,\cC,l,i)$ is a  
distinguished Giraud subcategory of $\cD$ which induces the distinguished Giraud
subcategory $\cC'$
of $\cH_\cD$.
\end{proof}
\smallskip

\bibliographystyle{amsplain}
\bibliography{bibart}

\end{document}